\providecommand{\R}{}
\renewcommand{\R}{\mathbb{R}}
\newcommand{\I}[1]{{\mathbf 1}_{\left\{#1\right\}}}									
\newcommand{\set}[1]{\left\{ #1 \right\}}								
\newcommand{\esub}[1]{{\mathbf E_{#1}}}
\newcommand\cF{\mathcal F}
\newcommand\cS{{\mathcal S}}
\newcommand{\bbW}{\mathbb{W}}
\newcommand{\dd}{\mathrm{d}}
\providecommand{\ora}[1]{}
\renewcommand{\ora}[1]{\overrightarrow{#1}}
\newcommand{\threepartdef}[6]
{
	\left\{
		\begin{array}{ll}
			#1 & \mbox{if } #2 \\
			#3 & \mbox{if } #4 \\ 
			#5 & \mbox{if } #6
		\end{array}
	\right.
}
\newcommand{\twopartdef}[4]
{
	\left\{
		\begin{array}{ll}
			#1 & \mbox{if } #2 \\
			#3 & \mbox{if } #4
		\end{array}
	\right.
}
\newtheorem{thm}{Theorem}
\newtheorem{lem}[thm]{Lemma}
\newtheorem{prop}[thm]{Proposition}
\newtheorem{cor}[thm]{Corollary}
\newtheorem{dfn}[thm]{Definition}
\newtheorem{rem}{Remark}
\numberwithin{thm}{section}
\numberwithin{equation}{section}
\begin{document}

\title[Optimality of a refraction strategy subject to {P}arisian ruin]{Optimality of a refraction strategy in the optimal dividends problem with absolutely continuous controls subject to {P}arisian ruin}

\author[Locas]{F\'elix Locas}
\address{D\'epartement de math\'ematiques, Universit\'e du Qu\'ebec \`a Montr\'eal (UQAM), 201 av.\ Pr\'esident-Kennedy, Montr\'eal (Qu\'ebec) H2X 3Y7, Canada}
\email{locas.felix@uqam.ca, renaud.jf@uqam.ca}

\author[Renaud]{Jean-Fran\c cois Renaud}


\date{\today}

\keywords{Dividend payments, absolutely continuous strategies, Parisian ruin, spectrally negative L\'evy process, refraction strategies}

\begin{abstract}
We consider de Finetti's optimal dividends problem with absolutely continuous strategies in a spectrally negative L\'evy model with Parisian ruin as the termination time. The problem considered is essentially a generalization of both the control problems considered by Kyprianou, Loeffen \& P\'erez \cite{kyprianou-et-al_2012} and by Renaud \cite{renaud_2019}. Using the language of scale functions for Parisian fluctuation theory, and under the assumption that the density of the L\'evy measure is completely monotone, we prove that a refraction dividend strategy is optimal and we characterize the optimal threshold. In particular, we study the effect of the rate of Parisian implementation delays on this optimal threshold.
\end{abstract}

\maketitle


\section{Introduction}\label{introsection}

In 2007, Avram, Palmowski \& Pistorius \cite{avram-palmowski-pistorius_2007} kickstarted a string of literature concerned with Bruno de Finetti's control problem \cite{definetti_1957} for spectrally negative Lévy processes (SNLPs). Using results from fluctuation theory for SNLPs, they studied this optimal dividends problem consisting in finding the optimal strategy maximizing the withdrawals made up to ruin. In particular, and inspired by results obtained in simpler models, they wanted to answer the following question: when is a barrier strategy optimal? In a pioneering follow-up work, Loeffen \cite{loeffen_2008} gave a clear and satisfactory sufficient condition on the Lévy measure for a barrier strategy to be optimal. In words, if the Lévy measure admits a completely monotone density, then a barrier strategy is optimal. Later, this condition was improved first by Kyprianou, Rivero \& Song \cite{kyprianou-et-al_2010} and then further improved by Loeffen \& Renaud \cite{loeffen-renaud_2010}. To the best of our knowledge, the condition used in \cite{loeffen-renaud_2010} is the mildest condition known to date; it says that, if the tail of the Lévy measure is a log-convex function, then a barrier strategy is optimal.

The stochastic control problem discussed in the previous paragraph is a singular control problem. It is one of the three versions of this classical control problem, the other two being the \textit{impulse problem} and the \textit{absolutely continuous problem}. See \cite{jeanblanc-shiryaev_1995} for a study of these three problems in a Brownian model. In the impulse problem, the analog of a barrier strategy is an $(a,b)$-strategy. It is interesting to note that again, if the tail of the Lévy measure is a log-convex function, then a certain $(a,b)$-strategy is optimal; see in sequence \cite{loeffen_2009}, \cite{loeffen-renaud_2010} and \cite{renaud_2024} for details and discussions on this matter. The third classical variation consists in restricting the set of admissible controls to absolutely continuous (with respect to the Lebesgue measure) strategies. In that case, the question becomes: when is a refraction strategy optimal? This problem has been studied, in a spectrally negative Lévy model, by Kyprianou, Loeffen \& Pérez \cite{kyprianou-et-al_2012} and the strongest condition of having a Lévy measure with a completely monotone density was again needed for a refraction strategy to be optimal. Once more, to the best of our knowledge, no one has been able yet to improve this condition.

In the last fifteen years or so, fluctuation theory for SNLPs was further developed by the addition of so-called Parisian ruin identities; see for example \cites{landriault-et-al_2011, loeffen-et-al_2013,albrecher-et-al_2016, lkabous-renaud_2019}. Simply said, in Parisian fluctuation theory, a barrier is said to have been crossed once it has been crossed for a given amount of time, known as a Parisian implementation delay. Naturally, studies have started to appear on the impact of these delays, used in the recognition of ruin, on the maximization of dividends, especially on the optimality of a barrier strategy. The singular version of this problem has been studied in \cite{czarna-palmowski_2014} and in \cite{renaud_2019} using a Parisian termination time with deterministic delays and exponential delays, respectively. Very recently, in \cite{renaud_2024}, the impulse control problem with Parisian exponential ruin has been considered. When using exponential Parisian delays, the condition of log-convexity of the tail of the Lévy measure was used as a sufficient condition for a barrier strategy (resp.\ an $(a,b)$-strategy) to be optimal in the singular problem (resp.\ impulse problem).

Finally, note that, from a modelling perspective, using exponential delays in the definition of ruin could be seen as an \textit{approximation} of deterministic delays. This modelling assumption is compensated by more explicit (analytical) results allowing for additional understanding of the model and the control problem. Note also that this choice of delays makes it equivalent to being able to declare bankruptcy only at Poisson times, thanks to the memoryless property of the exponential distribution. See, e.g., \cite{albrecher-cheung-thonhauser_2013}.

In this paper, we analyze de Finetti's control problem for absolutely continuous strategies in a SNLP model with exponential Parisian ruin. To solve this problem, we walk in the footsteps of \cite{kyprianou-et-al_2012} and \cite{renaud_2019} by using (and generalizing) methodologies and results used in those two papers. More precisely, we will show that a threshold strategy is optimal if the Lévy measure of the underlying SNLP admits a completely monotone density.

\subsection{Model, control problem and main result}\label{mathematical-setup-section}

We consider a model in which the uncontrolled surplus process $X = \set{X_t : t \geq 0}$ is a spectrally negative Lévy process (SNLP), i.e., $X$ is a Lévy process with no positive jumps. More details on SNLPs are provided in Section~\ref{preliminarysection}.

For a given control $\pi$, characterized by a process $L^\pi = \set{L_t^\pi : t \geq 0}$ where $L^\pi_t$ is the cumulative amount of dividends paid up to time $t$, the controlled process is given by $U^\pi = \set{U_t^\pi : t \geq 0}$ and defined by $U_t^\pi = X_t - L_t^\pi$.

Let us now define the Parisian termination time used in our model. First, fix a Parisian rate $p > 0$. Then, the ruin time corresponding to a strategy $\pi$ is the following Parisian first-passage time:
\begin{equation}\label{def_parisian-ruin}
\kappa_p^\pi := \inf\set{t > 0 : t - g_t^\pi > \mathbf{e}_p^{g_t^\pi} \text{ and } U_t^\pi < 0},
\end{equation}
where $g_t^\pi := \sup\set{0 \leq s \leq t : U_s^\pi \geq 0}$, and where $\mathbf{e}_p^{g_t^\pi}$ is an exponential random variable with mean $1/p$ and independent of the sigma-algebra $\bigvee_{t \geq 0} \cF_t$. Note that there is a \textit{new} exponential random variable associated to each excursion (of $U^\pi$) below zero.

As mentioned above, we are interested in absolutely continuous strategies. Therefore, we will consider strategies $\pi$ such that
\begin{equation*}
L_t^\pi = \int_0^t l_s^\pi \mathrm{d}s , \quad t \geq 0,
\end{equation*}
where $\set{l_t^\pi : t \geq 0}$ is an adapted and nonnegative stochastic process. Note that, as a consequence, we have that $\set{L_t^\pi : t \geq 0}$ is an adapted and nondecreasing stochastic process such that $L^\pi_0=0$. Let us fix a maximal dividend rate $K > 0$.  A strategy $\pi$ is said to be admissible if it is such that $0 \leq l_t^\pi \leq K$, for all $0 \leq t \leq \kappa_p^\pi$, and if $t \mapsto l_t^\pi \mathbf{1}_{\set{U_t^\pi < 0}} \equiv 0$, which means that no dividends are paid when the controlled process is negative. Let us denote by $\Pi_K$ the set of these admissible strategies.


Finally, let us fix a discount factor $q>0$ and let the performance function of a strategy $\pi$ be given by
\begin{equation}
V_\pi(x) = \esub{x}\left[\int_0^{\kappa_p^\pi} e^{-qt} \mathrm{d}L_t^\pi\right] = \esub{x}\left[\int_0^{\kappa_p^\pi} e^{-qt} l^\pi_t \mathrm{d}t \right] , \quad x \in \R .
\label{definition-performance-function}
\end{equation}
Consequently, the value function of this problem is
\[
V(x) = \sup_{\pi \in \Pi_K} V_\pi(x), \quad x \in \R .
\]
We want to find an optimal strategy $\pi^* \in \Pi_K$, that is a dividend strategy such that $V_{\pi^*}(x) = V(x)$ for all $x \in \R$.

In order to state our main result, we need to introduce the sub-family of refraction strategies. For a given $b \geq 0$, let $\pi_b$ be the refraction strategy at level $b$, i.e., the strategy paying dividends at the maximum rate $K$ when the (controlled) surplus process is above level $b$ and not paying dividends when the surplus process is below level $b$. More precisely, for this strategy, we write
\begin{equation}\label{edsrefracted}
\mathrm{d}U_t^b = \mathrm{d}X_t - K\mathbf{1}_{\set{U_t^b > b}} \mathrm{d}t ,
\end{equation}
for the controlled process, from which we deduce the dividend rate $l^b_t = K \mathbf{1}_{\set{U_t^b > b}}$, and we write $\kappa_p^b$ instead of $\kappa_p^{\pi_b}$ for the ruin time. Note that, for any $b \geq 0$, the refraction strategy at level $b$ is admissible, i.e., $\pi_b \in \Pi_K$.


We are now ready to state the main result of this paper.
\begin{thm}\label{mainresult}
If the L\'evy measure of $X$ has a completely monotone density, then there exists an optimal threshold $b^* \geq 0$ such that $\pi_{b^*}$ is an optimal strategy.
\end{thm}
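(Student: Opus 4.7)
The plan is a standard guess-and-verify (HJB) argument, adapting the methodology of Kyprianou--Loeffen--P\'erez \cite{kyprianou-et-al_2012} to the Parisian setting developed in \cite{renaud_2019}. The first step is to obtain a closed-form expression for $V_{\pi_b}(x)$ for each refraction strategy, in terms of the $q$- and $(q+p)$-scale functions of $X$ and of the refracted process $X - Kt$, by combining the exit identities for refracted SNLPs with the Parisian scale functions of \cite{loeffen-et-al_2013}. From this explicit form one reads off a natural candidate threshold
\[
b^* := \inf\set{b \geq 0 : \left. \tfrac{\partial}{\partial b} V_{\pi_b}(x) \right|_{x=b} \leq 0} ,
\]
equivalently characterised as the smallest $b \geq 0$ at which a certain ratio of derivatives of Parisian scale functions equals one.

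Second, I would establish a verification lemma: any sufficiently smooth function $W$ ($C^1$ on $\R$, and $C^2$ off a locally finite exceptional set when $X$ has bounded variation, $C^2$ on $\R$ otherwise) which satisfies the HJB variational inequality
\[
\sup_{0 \leq \ell \leq K} \bigl\{ (\cL - q) W(x) + \ell(1 - W'(x)) \bigr\} \leq 0 \quad \text{on } \set{x \geq 0} ,
\]
together with the correct behaviour on $\set{x < 0}$ encoding the Parisian $p$-killing (essentially $(\cL - q - p)W \leq 0$ below zero, with appropriate linking at $0$), dominates the value function: $W \geq V$. This follows from a standard It\^o/Meyer--It\^o argument, broken at the crossings of $0$ and at the discontinuity set of $W''$, with the Parisian delay contributing a killing term at rate $p$ during the excursions below zero, exactly along the lines of \cite{renaud_2019}.

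Third, the bulk of the work is to check this HJB inequality for $W = V_{\pi_{b^*}}$. On $\set{x > b^*}$ one has $(\cL - q) V_{\pi_{b^*}}(x) + K(1 - V_{\pi_{b^*}}'(x)) = 0$ by construction of the refracted dynamics, so the non-trivial inequalities are $V_{\pi_{b^*}}'(x) \geq 1$ for $0 \leq x \leq b^*$ (ensuring the supremum is attained at $\ell = 0$) and $V_{\pi_{b^*}}'(x) \leq 1$ for $x > b^*$. This is where the completely-monotone hypothesis enters: following Loeffen \cite{loeffen_2008}, complete monotonicity of the L\'evy density yields convexity of the scale function derivative $W^{(q)\prime}$ on $(0,\infty)$; this property must then be transferred to the Parisian scale functions and their refracted counterparts through the integral representations relating them to $W^{(q)}$ and $W^{(q+p)}$, and propagated through the explicit formula for $V_{\pi_b}$. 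I expect this convexity-transfer to be the main technical obstacle, since it requires simultaneously controlling the Parisian correction $p$ and the refraction drift $K$ in a way that strictly generalises both \cite{kyprianou-et-al_2012} and \cite{renaud_2019}.

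Finally, for the last assertion of the theorem, the defining inequality for $b^*$ at $b = 0$ is monotone in $p$: sending $p \downarrow 0$ lengthens the expected lifetime under $\pi_0$ and thus favours immediate maximal payout, so once $p$ drops below an explicit critical value (expressible in terms of scale-function derivatives at $0$), the inequality already holds at $b = 0$, giving $b^* = 0$.
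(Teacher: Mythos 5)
Your outline reproduces the paper's overall strategy (explicit performance function of a refraction strategy, candidate threshold, HJB verification lemma with $p$-killing below zero, reduction to $V_{\pi_{b^*}}'\geq 1$ on $[0,b^*]$ and $V_{\pi_{b^*}}'\leq 1$ on $(b^*,\infty)$), but it stops exactly where the real work begins. The step you defer --- ``this convexity-transfer \ldots{} I expect \ldots{} to be the main technical obstacle'' --- is the entire content of Proposition~\ref{hardest}, and the tool you propose for it is not strong enough. Convexity (or log-convexity) of $W^{(q)\prime}$, and its transfer to $Z_{q,p}'$ and to $h_p$ in~\eqref{definitionh}, only controls the region $[0,b^*]$, where $V_{\pi_{b^*}}' = Z_{q,p}'/Z_{q,p}'(b^*)$; it does not control the region $x> b^*$, where $V_{\pi_{b^*}}'$ contains the refraction convolution term $K\int_{b^*}^x \mathbb{W}^{(q)\prime}(x-y)Z_{q,p}'(y)\,\mathrm{d}y$, whose monotonicity cannot be read off from convexity alone (this is precisely why, unlike the singular problem, the absolutely continuous problem has never been solved under the weaker log-convexity hypothesis). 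The paper's argument uses the full force of complete monotonicity through Lemma~\ref{scalemonotone}: writing $\mathbb{W}^{(q)}$ as an exponential minus a completely monotone function, invoking Bernstein's theorem to get a measure $\mu$ with $V_{b^*}'(x) = Ce^{\Phi_K(q)x} + \int_0^\infty e^{-xt}u_{b^*}(t)\,\mu(\mathrm{d}t)$, showing $C=0$ from the bound $V_{b^*}\leq K/q$, and then performing a sign-change analysis of the explicit function $u_{b^*}$ (via its first two derivatives and the sign of $A_{b^*}$) to conclude $V_{b^*}''\leq 0$ on $(b^*,\infty)$. Without an argument of this type your verification step is incomplete, and with only ``convexity of $W^{(q)\prime}$'' it would fail.

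Two smaller points. First, your computation of $V_{\pi_b}$ on all of $\R$ is not a routine assembly of known exit identities: one needs the expectation $\esub{x}[e^{-q\nu_b^-}Z_{q,p}(Y_{\nu_b^-})\I{\nu_b^-<\infty}]$, for which the paper proves a new identity (Lemma~\ref{new-identity-lemma}), and the value $V_b(b)$ requires a separate limiting argument in the unbounded variation case; your candidate threshold, defined via $\partial_b V_{\pi_b}(x)|_{x=b}$, does coincide in spirit with the paper's minimizer of $h_p$, but its existence, uniqueness and finiteness rest on the convexity and divergence properties of $h_p$ (Lemma~\ref{caracteristiques}), which must be proved. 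Second, the ``large delays'' assertion is not a soft monotonicity-in-$p$ heuristic: the paper identifies the critical rate $p_{\mathrm{min}}$ through $\Phi_K(q)=\Phi(p_{\mathrm{min}}+q)$ in~\eqref{pmin} and derives the explicit criterion for $b^*>0$ by comparing $h_p(0)$ (computed from a Laplace transform of $Z_{q,p}$) with $Z_{q,p}'(0)=\Phi(p+q)-pW^{(q)}(0)$; since large delays correspond to small $p$, you would still need to verify that your ``explicit critical value'' exists and that the inequality genuinely holds below it, rather than asserting it from the expected-lifetime heuristic.
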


In fact, this is a preliminary version of our full solution to this control problem. In Theorem~\ref{mainresult2}, we provide an explicit expression for the value function $V$, including a characterization of the optimal threshold $b^*$.

\subsection{Contributions and outline of the paper}

The control problem described above can be considered as a generalization of the problem studied in \cite{kyprianou-et-al_2012} and the one studied in \cite{renaud_2019}. Indeed, in principle, if $p \to \infty$, then we should recover the problem (and results) in \cite{kyprianou-et-al_2012}, while if $K \to \infty$, then we should recover the problem in \cite{renaud_2019}; more on this at the end of Section~\ref{refractionsection}.

To prove Theorem~\ref{mainresult}, we combine ideas from the abovementioned two papers, using a now \textit{standard} methodology for optimal dividends problems in spectrally negative Lévy models. More precisely, we compute first the performance function of an arbitrary refraction strategy. Second, we find the optimal refraction level $b^* \geq 0$, i.e., identify the \textit{best} refraction strategy $\pi_{b^*}$ within the sub-family of refraction strategies. Third, using a verification lemma, we prove that this refraction strategy $\pi_{b^*}$ is in fact an optimal admissible strategy for our control problem. Even if we use the same problem-solving methodology, there are several technical difficulties that arise in our problem, which lead to new results. For example:
\begin{itemize}
\item in Proposition~\ref{performance-proposition}, we compute the performance function of an arbitrary refraction strategy subject to Parisian ruin, which is a function defined on $\R$, and to do so a new technical result is needed (see Lemma~\ref{new-identity-lemma});

\item in Lemma~\ref{caracteristiques}, when characterizing the optimal refraction level, we derive important analytical properties for a particular function (defined in Equation~\eqref{definitionh}), which go further than the case with classical ruin;

\item the proof of Proposition~\ref{hardest} is more involved than the corresponding one in \cite{kyprianou-et-al_2012} and, furthermore, we provide an alternative proof in the case $b^* = 0$.
\end{itemize}

The rest of the paper is organized as follows. In the next section, we present mathematical objects and some technical results needed to solve our control problem. Then, in Section~\ref{refractionsection}, we compute the performance function of an arbitrary refraction strategy and identify the optimal refraction level. Also, we relate our results to those obtained in \cite{kyprianou-et-al_2012} and in \cite{renaud_2019}. Finally, in Section~\ref{verificationsection}, we provide a verification lemma for the control problem and then use it to finish the proofs of Theorem~\ref{mainresult} and Theorem~\ref{mainresult2}.

\section{Preliminary results}\label{preliminarysection}

Now, let us be a little more specific about our model and present some of the mathematical objects needed for our solution.

First, some terminology and notation. For a function $f \colon \R \to \R$, we write $f(\infty) := \lim_{x \to \infty} f(x)$ if this limit exists. Also, we say that $f$ is completely monotone if $(-1)^n f^{(n)}(x) \geq 0$, for all $x$, where $f^{(n)}$ stands for the $n$-th derivative of $f$, if $n \geq 1$, and where $f^{(0)}=f$.  Implicitly, a completely monotone function $f$ is nonnegative and infinitely differentiable. Finally, $f$ is said to be log-convex if $\log(f)$ is convex.

\subsection{Spectrally negative Lévy processes}

Let $(\Omega, \cF, \left(\cF_t\right)_{t \geq 0}, \mathbf{P})$ be a filtered probability space on which $X = \set{X_t : t \geq 0}$ is a spectrally negative Lévy process (SNLP) with L\'evy triplet $(\gamma, \sigma, \nu)$, where $\gamma \in \R$, $\sigma \geq 0$ and where the Lévy measure $\nu$ is such that $\nu(-\infty, 0) = 0$ and
\begin{equation*}
\int_{(0, \infty)} (1 \wedge z^2) \nu(\mathrm{d}z) < \infty .
\end{equation*}
Recall that, for an SNLP, we have the existence of a Laplace exponent given by
\begin{equation*}
\psi(\lambda) = \gamma \lambda + \frac{1}{2}\sigma^2 \lambda^2 - \int_{(0, \infty)} (1 - e^{-\lambda z} - \lambda z \I{0 < z \leq 1}) \nu(\mathrm{d}z) , \quad \lambda \geq 0 .
\end{equation*}
As $X$ is a strong Markov process, we denote by $\mathbf{P}_x$ the law of $X$ when starting from $X_0 = x$ and by $\mathbf{E}_x$ the corresponding expectation. When $x = 0$, we write $\mathbf{P}$ and $\mathbf{E}$, as in the definition of the Laplace exponent above. It is well known that $\psi$ is a strictly convex function such that $\psi(0) = 0$ and $\psi(\infty) := \lim_{\lambda \to \infty} \psi(\lambda) = \infty$, with right-inverse function given by
\begin{equation*}
\Phi(q) = \sup\set{\lambda \geq 0 : \psi(\lambda) = q}.
\end{equation*} 

Recall also that if $\nu \equiv 0$, then $X$ is a Brownian motion with drift, while if $\sigma=0$ and $\nu(0, \infty) < \infty$, then $X$ is a compound Poisson process with drift. In this direction, we know that $X$ has paths of bounded variation (BV) if and only if $\sigma = 0$ and $\int_0^1 z \nu(\mathrm{d}z) < \infty$; otherwise, $X$ is said to have paths of unbounded variation (UBV). More precisely, if $X$ has paths of BV, then we can write
\begin{equation*}
X_t = ct - S_t ,
\end{equation*}
where $c := \gamma +\int_0^1 z \nu(\mathrm{d}z)>0$ and $S = \set{S_t : t \geq 0}$ is a driftless subordinator. 

For our fixed value of $K > 0$, we define another SNLP $Y = \set{Y_t : t \geq 0}$ by $Y_t = X_t - Kt$. It is easy to see that the Laplace exponent of $Y$ is given by $\psi_K(\lambda) = \psi(\lambda) - K \lambda$. Accordingly, we denote its right-inverse by $\Phi_K$.

As a standing assumption throughout this paper, we assume that, if $X$ has paths of BV, then $K \in (0,c)$. This is to avoid that the paths of $Y$, which are also of BV, be monotone decreasing. 

Finally, recall that, for any $\beta > 0$, it is known that the process $\mathcal{E}(\beta) = \set{\mathcal{E}_t(\beta) : t \geq 0}$, defined as
\begin{equation*}
\mathcal{E}_t(\beta) = \exp \left\lbrace \beta X_t - \psi(\beta) t \right\rbrace ,
\end{equation*}
is a unit-mean $\mathbf{P}$-martingale with respect to the filtration $(\mathcal{F}_t)_{t \geq 0}$. Hence, it may be used to define the following Esscher transformation:
\begin{equation}
\left.\frac{\mathrm{d}\mathbf{P}^\beta}{\mathrm{d}\mathbf{P}}\right|_{\mathcal{F}_t} = \mathcal{E}_t(\beta), \quad t \geq 0.
\label{esscher}
\end{equation}
Under the measure $\mathbf{P}^\beta$, it can be shown that the process $(X, \mathbf{P}^\beta)$ is also a SNLP.

For more details on spectrally negative L\'evy processes, we refer the reader to \cite{kyprianou_2014}. 


\subsection{Scale functions and fluctuation identities}

For any $a \in \R$, we define the following first-passage times:
\[
\tau_a^- = \inf\set{t ­> 0 : X_t < a} \quad \text{and} \quad \tau_a^+ = \inf\set{t > 0 : X_t > a} .
\]
Correspondingly, for $Y$, we will write $\nu_a^-$ and $\nu_a^+$. Also, let $\kappa_p$ be the Parisian ruin time of $X$, i.e., the random time defined in~\eqref{def_parisian-ruin} with the (null) strategy $\hat{\pi}$ such that $L^{\hat{\pi}}_t = 0$ for all $t \geq 0$.

To study those first-passage times, we will need different families of scale functions. First, for $q \geq 0$, the $q$-scale function of $X$ is defined as the continuous function on $[0,\infty )$ with Laplace transform 
\[
\int_{0}^{\infty} e^{-\theta y} W^{(q)}(y) \mathrm{d}y = \frac{1}{\psi(\theta)-q} , \quad \text{for $\theta>\Phi(q)$.}
\]
From this definition, we can show that
\begin{equation}\label{Wen0}
W^{(q)}(0+) = \twopartdef{1/c,}{\text{$X$ has paths of BV,}}{0,}{\text{$X$ has paths of UBV.}}
\end{equation}
It is known that this function is positive, strictly increasing and differentiable almost everywhere on $(0, \infty)$. We extend $W^{(q)}$ to the whole real line by setting $W^{(q)}(x)=0$ for $x<0$. We will write $W=W^{(0)}$ when $q=0$. 

Second, for $q,\theta \geq 0$, set
\[
Z_q (x,\theta) = e^{\theta x} \left( 1 - (\psi (\theta)-q) \int_0^x e^{-\theta y} W^{(q)}(y) \mathrm{d}y \right) , \quad x \in \R.
\]
Note that, for $x \leq 0$, we have $Z_q (x,\theta)=e^{\theta x}$. In the following, we will use these functions only with $\theta = \Phi(p+q)$ and $\theta = 0$. Consequently, let us define
\begin{equation}\label{Zphi(p+q)}
Z_{q,p}(x) := Z_q(x, \Phi(p+q)) = e^{\Phi(p+q) x} \left(1 - p \int_0^x e^{-\Phi(p+q) y} W^{(q)}(y) \mathrm{d}y \right) ,
\end{equation}
and $Z^{(q)}(x) := Z_q(x, 0) = 1 + q \int_0^x W^{(q)}(y) \mathrm{d}y$. Similarly, let $\mathbb{W}^{(q)}$ and $\mathbb{Z}^{(q)}$ be the corresponding functions for $Y$.

All these scale functions appear in various fluctuation identities for $X$ and $Y$. For example, it is well known that, for $x \in (-\infty,b]$, we have
\begin{equation}\label{upward}
\esub{x}\left[e^{-q \tau_b^+} \I{\tau_b^+ < \tau_0^-} \right] = \frac{W^{(q)}(x)}{W^{(q)}(b)}
\end{equation}
and
\begin{equation}\label{upwardparisien}
\esub{x}\left[e^{-q \tau_b^+} \I{\tau_b^+ < \kappa_p}\right] = \frac{Z_{q,p}(x)}{Z_{q,p}(b)} ,
\end{equation}
as well as
\begin{equation}\label{downward}
\esub{x}\left[e^{-q \tau_0^-} \I{\tau_0^- < \tau_b^+}\right] = Z^{(q)}(x) - \frac{Z^{(q)}(b)}{W^{(q)}(b)}W^{(q)}(x)
\end{equation}
and
\begin{equation}\label{downwardinfini}
\esub{x}\left[e^{-q \tau_0^-} \I{\tau_0^- < \infty}\right] = Z^{(q)}(x) - \frac{q}{\Phi(q)} W^{(q)}(x).
\end{equation}
Clearly, we have equivalent identities for the stopping times related to $Y$ in terms of the corresponding scale functions.

\subsection{Convexity properties of scale functions}\label{section:convexity}

Some analytical properties of scale functions are going to play a fundamental role in the following sections.

It is known that, if the Lévy measure $\nu$ has a density, then $W^{(q)}$ is continuously differentiable on $(0, \infty)$; see, e.g., \cite{chan-kyprianou-savov_2011}. In Section~\ref{verificationsection}, we will need this density to be completely monotone.

\begin{lem}[\cite{loeffen_2009}]\label{scalemonotone}
Suppose that the L\'evy measure of $X$ has a completely monotone density. For $q > 0$, there exists a completely monotone function $f$ such that
\begin{equation}\label{difference}
W^{(q)}(x) = \frac{1}{\psi^\prime(\Phi(q))} e^{\Phi(q)x} - f(x), \quad x \geq 0.
\end{equation}
Moreover, $W^{(q) \prime}$ is a log-convex function on $(0, \infty)$.
\end{lem}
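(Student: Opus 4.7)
The plan is to characterize $f$ through its Laplace transform and then invoke Bernstein's theorem. First, define
\[
g(\theta) := \frac{1}{\psi'(\Phi(q))(\theta-\Phi(q))} - \frac{1}{\psi(\theta)-q}, \quad \theta > \Phi(q).
\]
The apparent pole at $\theta = \Phi(q)$ is removable because $\psi(\theta)-q = \psi'(\Phi(q))(\theta-\Phi(q)) + O((\theta-\Phi(q))^2)$. Since $\int_0^\infty e^{-\theta x} \frac{e^{\Phi(q)x}}{\psi'(\Phi(q))}\,\mathrm{d}x = \frac{1}{\psi'(\Phi(q))(\theta-\Phi(q))}$, any $f$ satisfying \eqref{difference} must have Laplace transform $g$ on $(\Phi(q),\infty)$, and conversely inverting $g$ gives the unique candidate for $f$.

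To identify $f$ concretely and see that $f \geq 0$, I would apply the Esscher transformation \eqref{esscher} with parameter $\Phi(q)$. Under $\mathbf{P}^{\Phi(q)}$ the process $X$ is an SNLP with Laplace exponent $\psi^{*}(\lambda) := \psi(\lambda+\Phi(q))-q$ that drifts to $+\infty$, since $\psi^{*\prime}(0+) = \psi'(\Phi(q)) > 0$. A routine Laplace-transform identification gives $W^{(q)}(x) = e^{\Phi(q)x}W^{*}(x)$, where $W^{*}$ is the $0$-scale function of $(X,\mathbf{P}^{\Phi(q)})$, and the classical Wiener--Hopf identity for a drifting-to-$+\infty$ SNLP yields $W^{*}(x) = \psi'(\Phi(q))^{-1}\mathbf{P}^{\Phi(q)}\set{-\inf_{t \geq 0}X_t \leq x}$. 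Combining these,
\[
f(x) = \frac{e^{\Phi(q)x}}{\psi'(\Phi(q))}\,\mathbf{P}^{\Phi(q)}\set{-\inf_{t \geq 0}X_t > x},
\]
so $f \geq 0$ and $f(\infty)=0$. Note also that the tilted L\'evy density $\pi^{*}(z) = e^{-\Phi(q)z}\pi(z)$ under $\mathbf{P}^{\Phi(q)}$ is still completely monotone (product of two CM functions).

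The hard part will be upgrading nonnegativity to complete monotonicity. By Bernstein's theorem, $f$ is CM if and only if $g$ is a Stieltjes function, i.e., of the form $\int_0^\infty (\theta+s)^{-1}\rho(\mathrm{d}s)$ for some positive Radon measure $\rho$. Writing $\pi(z)=\int_0^\infty e^{-sz}\mu(\mathrm{d}s)$ and substituting into the L\'evy--Khintchine formula yields an explicit representation of $\psi$ from which $\psi$ continues meromorphically to $\C \setminus (-\infty,0]$; in particular $\psi''$ is itself a Stieltjes function. The key technical input is the localization of the zeros of $\psi(\theta)-q$ in $\C$: under the CM assumption, every such zero other than $\Phi(q)$ must lie on the negative real axis, which is exactly what is needed for $g$, after meromorphic continuation, to sit in the Pick--Nevanlinna class of Stieltjes transforms on $\C \setminus (-\infty,0]$. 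Establishing this zero-localization via complex analysis, in the spirit of \cite{loeffen_2008}, is where I expect the genuine difficulty to concentrate; once it is in hand, Bernstein's theorem delivers the CM property of $f$.

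Once complete monotonicity of $f$ is secured, the log-convexity of $W^{(q)\prime}$ follows at once. Differentiating \eqref{difference} gives
\[
W^{(q)\prime}(x) = \frac{\Phi(q)}{\psi'(\Phi(q))} e^{\Phi(q)x} + (-f'(x)),
\]
where both summands are log-convex: $e^{\Phi(q)x}$ trivially (log-linear), and $-f'(x)$ because it is CM (the derivative of a CM function is CM up to sign), and every CM function is log-convex (by Cauchy--Schwarz applied to the Bernstein representation one obtains $ff'' \geq (f')^2$). Since sums of log-convex functions are log-convex, $W^{(q)\prime}$ is log-convex on $(0,\infty)$.
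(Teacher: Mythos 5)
Your setup (Laplace-transform identification of $f$, the Esscher tilt giving $f(x)=\frac{e^{\Phi(q)x}}{\psi'(\Phi(q))}\,\mathbf{P}^{\Phi(q)}\{-\inf_{t\geq 0}X_t>x\}\geq 0$) is correct, and your final step—deducing log-convexity of $W^{(q)\prime}$ from the decomposition, since $-f'$ is completely monotone hence log-convex and sums of log-convex functions are log-convex—is exactly the standard argument. But the heart of the lemma, the complete monotonicity of $f$, is not proved: you reduce it to the claim that $g(\theta)=\frac{1}{\psi'(\Phi(q))(\theta-\Phi(q))}-\frac{1}{\psi(\theta)-q}$ is a Stieltjes function, identify the zero-localization of $\psi(\cdot)-q$ as the key difficulty, and then explicitly leave that step unestablished. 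As it stands this is a program, not a proof. Moreover, even granting the zero-localization, it would not by itself give the Stieltjes property: knowing that all singularities of the meromorphic continuation of $g$ lie on $(-\infty,0]$ does not yield positivity of the representing measure; you would additionally need the Pick/Herglotz property of $g$ on $\C\setminus(-\infty,0]$ (equivalently, positivity of the residues/weights), plus an argument passing from the heuristic finite-pole picture (hyperexponential densities) to a general completely monotone density, e.g.\ by approximation. A further sign that the analytic-continuation step is not under control is the parenthetical claim that $\psi''$ is a Stieltjes function: with $\pi(z)=\int_0^\infty e^{-sz}\mu(\mathrm{d}s)$ one gets $\psi''(\theta)=\sigma^2+\int_0^\infty \frac{2}{(\theta+s)^3}\,\mu(\mathrm{d}s)$, which is completely monotone in $\theta$ but not of Stieltjes form.

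For comparison, note that the paper offers no proof of this statement: it is quoted from Loeffen (2009), where complete monotonicity of $f$ is obtained by a different (probabilistic/Bernstein-function) route—under the Esscher measure $\mathbf{P}^{\Phi(q)}$ the Lévy density remains completely monotone, hence the descending ladder height subordinator has a completely monotone Lévy density, its Laplace exponent is a complete Bernstein function, and therefore its potential density (which is, up to normalization, $W_{\Phi(q)}'$) is completely monotone; the decomposition \eqref{difference} then follows by integrating the tail. If you want to complete your write-up, either carry out the zero-localization and Pick-property analysis in full (including the approximation step), or switch to this ladder-height/complete-Bernstein argument, which avoids complex analysis altogether.
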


If the L\'evy measure $\nu$ has a completely monotone density, then $W^{(q) \prime}$ is infinitely differentiable and it is a strictly convex function on $(0, \infty)$ that is decreasing on $(0, a^*)$ and increasing on $(a^*, \infty)$, where
\begin{equation}\label{aetoile}
a^* := \sup\set{x \geq 0 : W^{(q) \prime}(x) \leq W^{(q) \prime}(y), \text{ for all } y \geq 0} .
\end{equation}

\begin{rem}
Recall from \cites{avram-palmowski-pistorius_2007,loeffen_2008} that $a^*$ is the optimal barrier level in the classical singular version of de Finetti's optimal dividends problem.
\end{rem}

As we assume that $p>0$, we can write
\begin{equation*}
Z_{q,p} (x) = p \int_0^\infty e^{-\Phi(p+q) y} W^{(q)}(x+y) \mathrm{d}y , \quad x \in \R .
\end{equation*}
Then, for $x > 0$, we have
\begin{equation}\label{Zqprimeutile}
Z_{q,p}^\prime (x) = p \int_0^\infty e^{-\Phi(p+q) y} W^{(q)\prime}(x+y) \mathrm{d}y .
\end{equation}
Clearly, $x \mapsto Z_{q,p} (x)$ is a nondecreasing continuous function.

The following result is lifted from \cite{renaud_2019} and will also be needed in Section~\ref{verificationsection}.

\begin{lem}[\cite{renaud_2019}]\label{lemma:log-convexity}
If $W^{(q) \prime}$ is log-convex on $(0,\infty)$, then $Z_{q,p}^\prime$ is log-convex on $(0,\infty)$.
\end{lem}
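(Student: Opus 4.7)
The plan is to exploit the integral representation \eqref{Zqprimeutile}, namely
\[
Z_{q,p}^\prime(x) = p \int_0^\infty e^{-\Phi(p+q) y} W^{(q)\prime}(x+y) \, \mathrm{d}y,
\]
and invoke the fact that log-convexity is preserved under integration with respect to a positive measure. More precisely, the proof reduces to two observations: first, for each fixed $y \geq 0$, the integrand $x \mapsto e^{-\Phi(p+q)y} W^{(q)\prime}(x+y)$ is log-convex in $x$, because log-convexity is invariant under translation of the argument and under multiplication by a positive constant, and $W^{(q)\prime}$ is log-convex by hypothesis; second, the convex combination of log-convex functions (or, more generally, any positive mixture) is again log-convex.

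The second observation is the key analytic ingredient, and it follows from H\"older's inequality. Indeed, for $\lambda \in (0,1)$ and $x_1, x_2 \in (0,\infty)$, using log-convexity of each $f_y(x) := e^{-\Phi(p+q)y} W^{(q)\prime}(x+y)$ in the first step and H\"older's inequality for the measure $p\,\mathrm{d}y$ on $(0,\infty)$ in the second step, one obtains
\[
\int_0^\infty f_y(\lambda x_1 + (1-\lambda) x_2)\, \mathrm{d}y
\leq \int_0^\infty f_y(x_1)^\lambda f_y(x_2)^{1-\lambda}\, \mathrm{d}y
\leq \left(\int_0^\infty f_y(x_1)\, \mathrm{d}y\right)^\lambda \left(\int_0^\infty f_y(x_2)\, \mathrm{d}y\right)^{1-\lambda}.
\]
Dividing both sides by $1/p$ and taking logarithms then yields the log-convexity of $Z_{q,p}^\prime$ on $(0, \infty)$.

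The only mild obstacle is ensuring that the pointwise log-convexity of $W^{(q)\prime}$, which is stated on $(0, \infty)$ in Lemma~\ref{scalemonotone}, is enough to apply H\"older's inequality through the integral; since the integration variable $y$ is nonnegative and the base point $x$ is positive, we have $x+y > 0$ throughout, so the translated function $y \mapsto W^{(q)\prime}(x+y)$ stays in the domain where log-convexity holds, and standard integrability is automatic since $\Phi(p+q) > \Phi(q)$ guarantees exponential decay of the kernel against $W^{(q)\prime}$. This completes the plan.
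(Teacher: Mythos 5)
Your proof is correct: the H\"older-inequality argument showing that a positive mixture of log-convex functions is again log-convex, applied to the representation \eqref{Zqprimeutile} (with translation and positive scaling preserving log-convexity of each integrand), is exactly the standard route, and it is the argument this paper defers to \cite{renaud_2019} for rather than reproving. The only cosmetic slip is the phrase ``dividing both sides by $1/p$''---you mean multiplying both sides by $p = p^{\lambda}p^{1-\lambda}$---which does not affect the conclusion.
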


Combining Lemma~\ref{scalemonotone} and Lemma~\ref{lemma:log-convexity}, we deduce that, if the L\'evy measure $\nu$ has a completely monotone density, then $Z_{q,p}^\prime$ is infinitely differentiable and it is a log-convex function on $(0, \infty)$ that is decreasing on $(0, c^*)$ and increasing on $(c^*, \infty)$, where
\begin{equation}\label{cetoile}
c^* = \sup\set{x \geq 0 : Z_{q,p}'(x) \leq Z_{q,p}'(y), \text{ for all } y \geq 0} .
\end{equation}
Note that $c^* < \infty$ since $Z_{q,p}^\prime(\infty) = \infty$, the latter being inherited from $W^{(q) \prime}$ via~\eqref{Zqprimeutile}. For more details on these arguments, see \cite{renaud_2019}.

\begin{rem}
Note that these properties for $Z_{q,p}^\prime$ are also true under the mildest condition that the tail of the Lévy measure $\nu$ is log-convex. However, in Section~\ref{verificationsection}, we will also need the representation in~\eqref{difference} of Lemma~\ref{scalemonotone}.
\end{rem}

\section{Refraction strategies}\label{refractionsection}

In this section, as we are guessing that a refraction strategy should be optimal, we are first going to compute the performance function of an arbitrary refraction strategy. Let the performance function of $\pi_b$, that is the refraction strategy at level $b$, be given by
\begin{equation} \label{performance-refraction-definition}
V_b(x) = \esub{x} \left[\int_0^{\kappa_p^b} K e^{-qt} \I{U_t^b > b} \mathrm{d}t \right], \quad x \in \R.
\end{equation}

\begin{prop}\label{performance-proposition}
For $b \geq 0$, we have
\begin{equation}\label{performancefunction}
V_b(x) = \twopartdef{\frac{Z_{q,p}(x)}{h_p(b)}}{x \leq b,}{\frac{Z_{q,p}(x) + K\int_b^x \mathbb{W}^{(q)}(x-y) \left( Z_{q,p}'(y)-h_p(b) \right) \mathrm{d}y}{h_p(b)}}{x \geq b,}
\end{equation}
where
\begin{equation}\label{definitionh}
h_p(b) = \Phi_K(q) \int_0^\infty e^{-\Phi_K(q)y} Z_{q,p}'(b+y) \mathrm{d}y .
\end{equation}
\end{prop}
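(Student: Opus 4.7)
The two branches of~\eqref{performancefunction} correspond to the two regimes of the controlled dynamics: on $\{U_t^b\leq b\}$ the process coincides with $X$ with no dividends, while on $\{U_t^b>b\}$ it coincides with $Y = X - Kt$ and dividends accrue at rate $K$. My plan is to follow a strong-Markov decomposition in the spirit of the classical-ruin proof in~\cite{kyprianou-et-al_2012}, with scale-function identities written in terms of the Parisian analogue $Z_{q,p}$ rather than $Z^{(q)}$.

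The case $x\leq b$ is essentially direct. Since $U^b$ coincides with $X$ on $[0,\tau_b^+]$ and no dividends accrue before $\tau_b^+$, the strong Markov property at $\tau_b^+$ combined with identity~\eqref{upwardparisien} (valid for every $x\leq b$, with $Z_{q,p}(x)=e^{\Phi(p+q)x}$ on the negative half-line) yields
\begin{equation*}
V_b(x) = \esub{x}\left[e^{-q\tau_b^+}\I{\tau_b^+<\kappa_p}\right] V_b(b) = \frac{Z_{q,p}(x)}{Z_{q,p}(b)}\,V_b(b),\qquad x\leq b.
\end{equation*}
It therefore remains only to identify $V_b(b)$ and to derive the formula on $\{x>b\}$.

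For $x>b$, I would decompose at $\nu_b^-$, the first passage of $Y$ below $b$. Since $U^b$ coincides with $Y$ on $[0,\nu_b^-]$, the strong Markov property at $\nu_b^-$ combined with the formula already obtained (applied at the starting value $Y_{\nu_b^-}\leq b$) gives
\begin{equation*}
V_b(x) = \frac{K}{q}\left(1 - \esub{x}\left[e^{-q\nu_b^-}\I{\nu_b^-<\infty}\right]\right) + \frac{V_b(b)}{Z_{q,p}(b)}\,\esub{x}\left[e^{-q\nu_b^-}Z_{q,p}(Y_{\nu_b^-})\I{\nu_b^-<\infty}\right].
\end{equation*}
The first expectation is the standard identity for $Y$, equal to $\mathbb{Z}^{(q)}(x-b)-(q/\Phi_K(q))\mathbb{W}^{(q)}(x-b)$ by applying~\eqref{downwardinfini} to $Y$. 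The second is a Gerber--Shiu functional for $Y$ with payoff $Z_{q,p}$ evaluated at the overshoot, and this is precisely what Lemma~\ref{new-identity-lemma} is designed to compute. The subtlety is that $Z_{q,p}$ is piecewise defined --- purely exponential on $(-\infty,0]$ and given by~\eqref{Zphi(p+q)} on $[0,b]$ --- while $Y_{\nu_b^-}$ has support on all of $(-\infty,b]$, so the Gerber--Shiu integral must be split at $0$ and reassembled using the interplay between $\Phi(p+q)$ and $\Phi_K(q)$ afforded by the Esscher transform~\eqref{esscher}.

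Inserting the output of Lemma~\ref{new-identity-lemma}, grouping the $V_b(b)$-dependent and $V_b(b)$-independent terms, and simplifying with the elementary identity $q\int_b^x\mathbb{W}^{(q)}(x-y)\,\mathrm{d}y=\mathbb{Z}^{(q)}(x-b)-1$, I would arrive at the form stated in~\eqref{performancefunction} up to the unknown $V_b(b)$. Matching the two cases at $x=b$ (continuity of $V_b$) then pins down $V_b(b)=Z_{q,p}(b)/h_p(b)$, and the Esscher-type constant $h_p(b)=\Phi_K(q)\int_0^\infty e^{-\Phi_K(q)y}Z_{q,p}'(b+y)\,\mathrm{d}y$ emerges directly from the Gerber--Shiu computation, since $\Phi_K(q)$ is the exponential rate governing the upward overshoot of $Y$ under the drift-reversing Esscher measure. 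The principal obstacle is thus Lemma~\ref{new-identity-lemma} itself; the remaining steps are routine strong-Markov reductions and algebraic rearrangements.
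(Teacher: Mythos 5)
Your proposal follows the paper's route almost verbatim up to the last step: strong Markov at $\tau_b^+$ with \eqref{upwardparisien} for $x\leq b$, strong Markov at $\nu_b^-$ with \eqref{downwardinfini} applied to $Y$ for $x>b$, and Lemma~\ref{new-identity-lemma} to evaluate the Gerber--Shiu term $\esub{x}\left[e^{-q\nu_b^-}Z_{q,p}(Y_{\nu_b^-})\I{\nu_b^-<\infty}\right]$. (Incidentally, no splitting of that integral at $0$ is needed: the representation $Z_{q,p}(x)=p\int_0^\infty e^{-\Phi(p+q)y}W^{(q)}(x+y)\,\mathrm{d}y$ holds for all real $x$ because $W^{(q)}$ vanishes on the negative half-line, and the lemma already exploits this.)

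The genuine gap is in how you pin down $V_b(b)$. After the decomposition, the $x\geq b$ expression still contains $V_b(b)$ as an unknown, namely $V_b(x)=\frac{K}{q}\bigl(1-\mathbb{Z}^{(q)}(x-b)+\frac{q}{\Phi_K(q)}\mathbb{W}^{(q)}(x-b)\bigr)+\frac{G_b(x)}{Z_{q,p}(b)}V_b(b)$ with $G_b(b)=Z_{q,p}(b)-\frac{K}{\Phi_K(q)}\mathbb{W}^{(q)}(0)h_p(b)$. Setting $x=b$ and equating with the value $V_b(b)$ from the other branch yields $0=\frac{K}{\Phi_K(q)}\mathbb{W}^{(q)}(0)\bigl(1-\frac{h_p(b)}{Z_{q,p}(b)}V_b(b)\bigr)$. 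When $X$ (hence $Y$) has paths of bounded variation, $\mathbb{W}^{(q)}(0)=1/(c-K)>0$ and this indeed gives $V_b(b)=Z_{q,p}(b)/h_p(b)$, which is exactly the paper's argument. But when $X$ has paths of unbounded variation, $\mathbb{W}^{(q)}(0)=0$, the unknown cancels identically, and continuity at $x=b$ holds automatically for \emph{every} value of $V_b(b)$; your matching argument then determines nothing. This is precisely why the paper adds a separate limiting step in that case: it strongly approximates $X$ by bounded-variation SNLPs $X^n$, notes that the Laplace exponents converge and hence (by the continuity theorem for Laplace transforms) so do the associated scale functions, and transfers the bounded-variation formula for $V_b(b)$ to the limit. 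Without this (or some substitute, e.g.\ a direct computation of $V_b(b)$ by another identity), your proof covers only the bounded-variation case.
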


\begin{proof}
For $x \leq b$, we get
\begin{equation*}
V_b(x) = \esub{x}\left[e^{-q \tau_b^+} \I{\tau_b^+ < \kappa_p}\right] V_b(b) = \frac{Z_{q,p}(x)}{Z_{q,p}(b)} V_b(b),
\end{equation*}
where we used the strong Markov property, the fact that $X$ is a SNLP and the Parisian fluctuation identity given by~\eqref{upwardparisien}. 

Now, for $x \geq b$. We have, using the strong Markov property again, 
\begin{align*}
V_b(x) &= \esub{x}\left[\int_0^{\nu_b^-} Ke^{-qt} \mathrm{d}t \right] + \esub{x}\left[e^{-q \nu_b^-}  V_b\left(Y_{\nu_b^-}\right) \I{\nu_b^- < \infty}\right] \\
&= \frac{K}{q}\left(1 - \esub{x}\left[e^{-q \nu_b^-} \I{\nu_b^- < \infty}\right]\right) + \frac{\esub{x}\left[e^{-q \nu_b^-} Z_{q,p}\left(Y_{\nu_b^-}\right) \I{\nu_b^- < \infty} \right]}{Z_{q,p}(b)}V_b(b).
\end{align*}

Using the classical fluctuation identity given by~\eqref{downwardinfini} for $Y$ and the identity given by~\eqref{new-identity-equation} of Lemma~\ref{new-identity-lemma}, and noticing that
\begin{align*}
K \mathbb{W}^{(q)}(x-b) \int_0^\infty e^{-\Phi_K(q) z} Z_{q,p}^\prime(b+z) \mathrm{d}z = \frac{K}{\Phi_K(q)} \mathbb{W}^{(q)}(x-b) h_p(b),
\end{align*}
it results that
\begin{equation}\label{Vbpresque}
V_b(x) = \twopartdef{\frac{Z_{q,p}(x)}{Z_{q,p}(b)} V_b(b)}{x \leq b,}{\frac{K}{q}\left(1 - \mathbb{Z}^{(q)}(x-b) + \frac{q}{\Phi_K(q)}\mathbb{W}^{(q)}(x-b)\right) + \frac{G_b(x)V_b(b)}{Z_{q,p}(b)}}{x \geq b,}
\end{equation}
where
\begin{equation*}
G_b(x) := p \int_0^\infty e^{-\Phi(p+q)y} w_b^{(q)}(x;-y) \mathrm{d}y -\frac{K}{\Phi_K(q)} \mathbb{W}^{(q)}(x-b) h_p(b), \quad x \geq b \geq 0.
\end{equation*}
and where
\begin{equation*}
w_b^{(q)}(x;y) := W^{(q)}(x-y) + K\I{x \geq b} \int_b^x \mathbb{W}^{(q)}(x-z) W^{(q) \prime}(z-y) \mathrm{d}z, \quad x, y \in \R, b \geq 0.
\end{equation*}
Note that we have $w_{b+y}^{(q)}(b+y; 0) = w_b^{(q)}(x;-y)$, for all $x,y \in \R$ and for all $b \geq 0$.

Now, we must compute $V_b(b)$. First, let us assume $X$ has paths of bounded variation. We can use~\eqref{Vbpresque} to solve for $V_b(b)$ in~\eqref{Vbpresque}. Before doing so, note that we have $\mathbb{Z}^{(q)}(0) = 1$ and
\begin{equation*}
G_b(b) = Z_{q,p}(b) - \frac{K}{\Phi_K(q)} \mathbb{W}^{(q)}(0) h_p(b).
\end{equation*}
Consequently, 
\begin{align*}
V_b(b) = \left(\frac{K}{\Phi_K(q)} \mathbb{W}^{(q)}(0)\right) \left(1 - \frac{G_b(b)}{Z_{q,p}(b)}\right)^{-1} = \frac{Z_{q,p}(b)}{\Phi_K(q)\int_0^\infty e^{-\Phi_K(q) z} Z_{q,p}^\prime(b+z) \mathrm{d}z} = \frac{Z_{q,p}(b)}{h_p(b)}.
\end{align*}

Now, assume $X$ has paths of UBV. In this case, we get the same expression for $V_b(b)$ by using the following approximation argument: for $n$ large enough, let $X^n$ be the bounded variation SNLP with L\'evy triplet $(\gamma, 0, \nu^n)$, in which
\begin{equation*}
\nu^n(\dd x) = \I{x \geq 1/n}\nu(\dd \theta) + \sigma^2 n^2 \delta_{1/n}(\dd x),
\end{equation*}
where $\delta_{1/n}(\dd x)$ stands for the Dirac point mass measure at $1/n$. It is well known (see \cite{bertoin_1996}) that $X^n$ converges to $X$ on compact time intervals, i.e., for all $t > 0$, we have
\begin{equation*}
\lim_{n \to \infty} \sup_{0 \leq s \leq t} \left|X_s^n - X_s\right| = 0.
\end{equation*}
Hence, defining $\kappa_p^{n,b}$ as the termination time when using the surplus process $X^n$ with the refraction strategy at level $b$, and defining similarly $L_t^{b,n}$ as the corresponding dividend process, it can be shown that we have, $\mathbb{P}_b$-a.s., \begin{equation*}
\lim_{n \to \infty}\int_0^{\kappa_p^{n,b}} e^{-qt} \dd L_t^{n,b} = \int_0^{\kappa_p^b} e^{-qt} \dd L_t^b.
\end{equation*}
Since we have
\begin{equation*}
\int_0^{\kappa_p^{n,b}} e^{-qt} \dd L_t^{n,b} \leq \int_0^\infty Ke^{-qt} \dd t = \frac{K}{q} ,
\end{equation*}
it follows by the dominated convergence theorem that
\begin{equation}\label{1sthalf}
\lim_{n \to \infty} \esub{b}\left[\int_0^{\kappa_p^{n,b}} e^{-qt} \dd L_t^{n,b}\right] = \esub{b}\left[\int_0^{\kappa_p} e^{-qt} \dd L_t^b\right] .
\end{equation}

Next, it is also well known that, as $n$ tends to infinity, the Laplace exponent of $X^n$ converges to the Laplace exponent of $X$, which means, by the continuity theorem for Laplace transforms, that the corresponding $q$-scale functions also converge. In other words, defining $Z_{q,p}^n$ and $h_p^n$ as the corresponding functions for the surplus process $X^n$, it follows that
\begin{equation}\label{2ndhalf}
\lim_{n \to \infty} \frac{Z_{q,p}^n(b)}{h_p^n(b)} = \frac{Z_{q,p}(b)}{h_p(b)}.
\end{equation}

Combining \eqref{1sthalf} and \eqref{2ndhalf}, and since, by the previous step in this proof (when $X$ has paths of BV), we have
\[
\esub{b}\left[\int_0^{\kappa_p^{n,b}} e^{-qt} \dd L_t^{n,b}\right]=\frac{Z_{q,p}^n(b)}{h_p^n(b)} ,
\]
it follows that when $X$ has paths of UBV, we also have \begin{equation*}
V_b(b) = \frac{Z_{q,p}(b)}{h_p(b)}.
\end{equation*}

In conclusion, for $x \geq b$, we have
\begin{align*}
V_b(x) &= \frac{K}{q}\left(1 - \mathbb{Z}^{(q)}(x-b)\right) + \frac{p \int_0^\infty e^{-\Phi(p+q)y} w_b^{(q)}(x;-y) \mathrm{d}y}{h_p(b)} \\
&= -K\int_0^{x-b} \mathbb{W}^{(q)}(y) \mathrm{d}y + \frac{Z_{q,p}(x) + K\int_b^x \mathbb{W}^{(q)}(x-y) Z_{q,p}^\prime(y) \mathrm{d}y}{h_p(b)}.
\end{align*}
Putting all the pieces together, the result follows.
\end{proof}

Note that, for fixed $q,K > 0$, the map $p \mapsto \Phi_K(q) - \Phi(p+q)$ is strictly decreasing and goes to $-\infty$ as $p$ tends to $\infty$, thanks to the fact that $\Phi$ is strictly increasing and $\Phi(\infty) = \infty$. Since $\Phi_K(q) > \Phi(q)$, there exists a unique $p_0 > 0$ such that
\begin{equation}\label{p_0}
\Phi_K(q) - \Phi(p_0 + q) = 0 .
\end{equation}
It is easy to verify that $\Phi(p_0 + q) = p_0 / K$.

Consequently, let us define the following quantity: for $K,p > 0$,
\begin{equation}
\label{C(K,p)extended}
C(K,p) = \twopartdef{\frac{\Phi(p+q) - p/K}{\Phi_K(q) - \Phi(p+q)}}{p \neq p_0,}{\Phi_K'(p_0 / K)}{p = p_0.}
\end{equation}

Now, note that
\begin{equation}\label{h(0)partiel}
h_p(0) = \Phi_K(q) C(K,p),
\end{equation}
which follows readily from the following Laplace transform: for $\theta>\Phi(q)$,
\[
\int_0^\infty e^{-\theta y} Z_{q,p}(y) \mathrm{d}y = \left( \frac{1}{\psi(\theta)-q} \right) \frac{\psi(\theta)-(p+q)}{\theta-\Phi(p+q)} .
\]
The details are left to the reader.

In conclusion, we can write
\[
V_0(x) =
\begin{cases}
(\Phi_K(q) C(K,p))^{-1}e^{\Phi(p+q) x} & \text{if $x \leq 0$,}\\
-K \int_0^{x} \mathbb{W}^{(q)}(y) \mathrm{d}y & \\
\qquad + (\Phi_K(q) C(K,p))^{-1} \left( Z_{q,p}(x) + K\int_0^x \mathbb{W}^{(q)}(x-y)Z_{q,p}'(y) \mathrm{d}y \right) & \text{if $x \geq 0$.}
\end{cases}
\]

\subsection{The optimal threshold}

We now want to find an optimal threshold $b^* \in [0,\infty)$ for which the refraction strategy $\pi_{b^*}$ outperforms all other refraction strategies. In view of Proposition~\ref{performance-proposition}, a natural candidate would be
\begin{equation}\label{eq:def-of-b-star}
b^* := \mathrm{argmin}_{y \geq 0} h_p(y).
\end{equation}
Under our standing assumption, this quantity is well defined, thanks to the next lemma in which the required analytical properties of $h_p$ are obtained.

\begin{lem}\label{caracteristiques}
If the L\'evy measure of $X$ has a completely monotone density, then $h_p$ is a nonnegative, infinitely differentiable and strictly convex function on $(0, \infty)$ such that $h_p(\infty) = \infty$. In particular, a minimizer of $h_p$ exists and is unique.
\end{lem}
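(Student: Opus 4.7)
The plan is to extract each of the four claimed properties of $h_p$ from a corresponding property of $Z_{q,p}'$, using the representation
\[
h_p(b) = \Phi_K(q) \int_0^\infty e^{-\Phi_K(q) y} Z_{q,p}'(b+y) \, \mathrm{d}y ,
\]
which displays $h_p$ as an average, against the probability density $\Phi_K(q) e^{-\Phi_K(q) y}$, of translates of $Z_{q,p}'$. Nonnegativity is immediate from $Z_{q,p}' \geq 0$, which itself follows from formula~\eqref{Zqprimeutile} and the monotonicity of $W^{(q)}$. The limit $h_p(\infty) = \infty$ is equally quick: once $b \geq c^*$, $Z_{q,p}'$ is nondecreasing on $[b, \infty)$, so $h_p(b) \geq Z_{q,p}'(b)$, which tends to $\infty$ by the already-noted divergence $Z_{q,p}'(\infty) = \infty$.

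For infinite differentiability I would differentiate under the integral sign to obtain, for $b>0$,
\[
h_p^{(n)}(b) = \Phi_K(q) \int_0^\infty e^{-\Phi_K(q) y} Z_{q,p}^{(n+1)}(b+y) \, \mathrm{d}y .
\]
The legality of this interchange rests on Lemma~\ref{scalemonotone}: the decomposition $W^{(q)}(x) = e^{\Phi(q) x}/\psi'(\Phi(q)) - f(x)$ with $f$ completely monotone yields $|W^{(q)(k)}(x)| = O(e^{\Phi(q) x})$ for every $k$ (the derivatives of $f$ alternate in sign and, being monotone, are bounded on $[b_0,\infty)$ for any $b_0>0$). Plugging into~\eqref{Zqprimeutile} gives $|Z_{q,p}^{(n+1)}(b+y)| = O(e^{\Phi(q)(b+y)})$; combined with the strict inequality $\Phi_K(q) > \Phi(q)$ (since $\psi_K = \psi - K\lambda < \psi$ on $(0,\infty)$), one obtains an integrable dominating function and iterated differentiation under the integral is justified.

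For strict convexity I would avoid computing $h_p''$ directly---the sign of $Z_{q,p}'''$ is not transparent---and instead integrate a pointwise strict-convexity inequality. Since $W^{(q)\prime}$ is strictly convex on $(0, \infty)$ (the consequence of Lemma~\ref{scalemonotone} recorded immediately after it), it is not affine on any subinterval, so $W^{(q)\prime\prime}$ is nonnegative and cannot be identically zero on $(x, \infty)$ for any $x \geq 0$; differentiating~\eqref{Zqprimeutile} twice then yields
\[
Z_{q,p}''(x) = p \int_0^\infty e^{-\Phi(p+q)y} W^{(q)\prime\prime}(x+y) \, \mathrm{d}y > 0 ,
\]
so $Z_{q,p}'$ is strictly convex on $[0, \infty)$. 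For any $0 \leq b_1 < b_2$, $\lambda \in (0,1)$, and every $y > 0$ one therefore has
\[
Z_{q,p}'\bigl(\lambda b_1 + (1-\lambda) b_2 + y\bigr) < \lambda Z_{q,p}'(b_1+y) + (1-\lambda) Z_{q,p}'(b_2+y),
\]
and integrating against $\Phi_K(q) e^{-\Phi_K(q) y} \mathrm{d}y$ preserves the strict inequality, delivering strict convexity of $h_p$. Existence and uniqueness of a minimizer on $[0,\infty)$ then follow from strict convexity, continuity, and $h_p(\infty)=\infty$. I expect the main obstacle to be the dominated-convergence step for repeated differentiation, which has to be carried out carefully at each order using the decomposition of Lemma~\ref{scalemonotone}; the strict-convexity part is short, but only if one resists the temptation to differentiate $h_p$ twice directly.
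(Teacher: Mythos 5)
Most of your proposal is fine, but the strict-convexity step fails as written. From strict convexity of $W^{(q)\prime}$ you cannot conclude that $W^{(q)\prime\prime}\geq 0$: convexity of $W^{(q)\prime}$ is a statement about $W^{(q)\prime\prime\prime}$, and in fact $W^{(q)\prime\prime}\leq 0$ on $(0,a^*)$ whenever $a^*>0$, since the paper records that $W^{(q)\prime}$ is \emph{decreasing} there; for the same reason $Z_{q,p}^{\prime\prime}\leq 0$ on $(0,c^*)$, so your asserted inequality $Z_{q,p}^{\prime\prime}(x)>0$ is false in general. Moreover, even where $Z_{q,p}^{\prime\prime}>0$ does hold, it would only give strict convexity of $Z_{q,p}$, not of $Z_{q,p}^{\prime}$; what you must push through the integral defining $h_p$ is convexity of the \emph{function} $Z_{q,p}^{\prime}$, i.e.\ a sign condition on $Z_{q,p}^{\prime\prime\prime}$, so the step "$Z_{q,p}^{\prime\prime}>0$, hence $Z_{q,p}^{\prime}$ is strictly convex" is a non sequitur.

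The argument is repaired by shifting every derivative order up by one. Strict convexity of $W^{(q)\prime}$ on $(0,\infty)$ gives $W^{(q)\prime\prime\prime}\geq 0$ there, not identically zero on any interval (alternatively, Lemma~\ref{scalemonotone} gives $W^{(q)\prime\prime\prime}(x)=\Phi(q)^3e^{\Phi(q)x}/\psi^{\prime}(\Phi(q))-f^{\prime\prime\prime}(x)>0$ directly, since $f^{\prime\prime\prime}\leq 0$ for a completely monotone $f$). The same domination you set up via Lemma~\ref{scalemonotone} and $\Phi_K(q)>\Phi(q)$ justifies differentiating~\eqref{Zqprimeutile} twice, yielding $Z_{q,p}^{\prime\prime\prime}(x)=p\int_0^\infty e^{-\Phi(p+q)y}W^{(q)\prime\prime\prime}(x+y)\,\mathrm{d}y>0$ for $x>0$, hence $Z_{q,p}^{\prime}$ is strictly convex on $(0,\infty)$; then your pointwise inequality and the integration against the density $\Phi_K(q)e^{-\Phi_K(q)y}$ do give strict convexity of $h_p$, and the rest of your proof (nonnegativity, the dominated-differentiation argument for smoothness, existence and uniqueness of the minimizer) stands. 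Note that the paper handles this step differently, deducing log-convexity of $h_p$ from log-convexity of $Z_{q,p}^{\prime}$ (Lemma~\ref{lemma:log-convexity}, following \cite{renaud_2019}) and reading off strict convexity from that; on the other hand, its proof that $h_p(\infty)=\infty$ is a longer computation (integration by parts plus an Esscher-transform bound giving $h_p(b)\geq \frac{\Phi_K(q)\Phi(q)}{\Phi_K(q)-\Phi(q)}Z_{q,p}(b)$), whereas your observation that $h_p(b)\geq Z_{q,p}^{\prime}(b)$ for $b\geq c^*$ combined with $Z_{q,p}^{\prime}(\infty)=\infty$ is correct and simpler.
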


\begin{proof}
Under our assumption, $h_p$ is nonnegative and infinitely differentiable because $Z_{q,p}'$ is nonnegative and infinitely differentiable. Next, recall that combining Lemma~\ref{scalemonotone} and Lemma~\ref{lemma:log-convexity}, we get that $Z_{q,p}^\prime$ is log-convex on $(0, \infty)$. As a consequence, we can prove that $h_p$ is also log-convex on $(0, \infty)$, using the properties of log-convex functions, as in \cite{renaud_2019} for $Z_{q,p}^\prime$. In particular, we have that $h_p$ is strictly convex on $(0, \infty)$. 

Next, we prove that $h_p(\infty) = \infty$. Using integration by parts for the first inequality, we can write
\begin{align*}
h_p(b) &\geq (\Phi_K(q))^2 \int_0^\infty e^{- \Phi_K(q)u}\left(Z_{q,p}(u+b) - Z_{q,p}(b)\right) \mathrm{d}u \\
&= p (\Phi_K(q))^2 \int_0^\infty e^{- \Phi_K(q) u} \left\lbrace \int_0^\infty e^{-\Phi(p+q)y} \left[ W^{(q)}(u+b+y) - W^{(q)}(b+y) \right] \mathrm{d}y \right\rbrace \mathrm{d}u \\
&=  p (\Phi_K(q))^2 \int_0^\infty e^{- \Phi_K(q) u} \\
&\qquad \times \left\{ \int_0^\infty e^{-\Phi(p+q)y} e^{\Phi(q)(b+y)} \left[e^{\Phi(q)u}W_{\Phi(q)}(u+b+y) - W_{\Phi(q)}(b+y)\right] \mathrm{d}y \right\}\mathrm{d}u \\
&\geq p (\Phi_K(q))^2 \int_0^\infty e^{- \Phi_K(q) u} \left\{\int_0^\infty e^{-\Phi(p+q)y} e^{\Phi(q)(b+y)} W_{\Phi(q)}(b+y)\left[e^{\Phi(q)u} - 1\right] \mathrm{d}y \right\}\mathrm{d}u \\
&= (\Phi_K(q))^2 \left[ \int_0^\infty e^{-\Phi_K(q)u} \left(e^{\Phi(q)u} - 1\right) \mathrm{d}u\right] Z_{q,p}(b) \\
&= \frac{\Phi_K(q) \Phi(q)}{\Phi_K(q) - \Phi(q)} Z_{q,p}(b),
\end{align*}
where $W_{\Phi(q)}$ is the $0$-scale function for the SNLP $X$ with respect to the probability measure $\mathbf{P}_x^{\Phi(q)}$ given by the Esscher transformation in~\eqref{esscher}. Since $\Phi_K(q) > \Phi(q)$ and $Z_{q,p}(\infty) = \infty$, we can conclude that $h_p(\infty) = \infty$. 
\end{proof}

In the next proposition, we give some important properties of this (candidate) optimal threshold.

\begin{prop}\label{specifications-sur-b^*}
Assume the L\'evy measure of $X$ has a completely monotone density. We have $0 \leq b^* \leq c^*$. Also, $b^* > 0$ if and only if
\begin{equation}\label{condition-b^*}
\Phi_K(q) C(K,p) <  \Phi(p+q) - p W^{(q)}(0+).
\end{equation}
\end{prop}

\begin{proof}
First, note that $h_p(b)$ can be written as the expectation of the random variable $Z_{q,p}^\prime(\mathbf{e}+b)$, where $\mathbf{e}$ is an (independent) exponential random variable with mean $1/\Phi_K(q)$. Since $Z_{q,p}^\prime$ is increasing on $(c^*, \infty)$ under our assumption, then so is $h_p$. This proves that $b^* \leq c^*$.

Next, let us prove that $b^* > 0$ if and only if
\begin{equation}\label{condition-generale}
h_p(0) < Z_{q,p}'(0+).
\end{equation}
As
\begin{equation*}
h_p(b) = \Phi_K(q) \int_0^\infty e^{-\Phi_K(q) y} Z_{q,p}'(b+y) \mathrm{d}y = \Phi_K(q) e^{\Phi_K(q) b} \int_b^\infty e^{- \Phi_K(q) z} Z_{q,p}'(y) \mathrm{d}y ,
\end{equation*}
then we have
\begin{equation}\label{liencondition}
h_p'(b) = \Phi_K(q)\left(h_p(b) - Z_{q,p}'(b)\right) .
\end{equation}
By the strict convexity of $h_p$, it is clear that $b^* > 0$ if and only if $h_p'(0) < 0$. We deduce from~\eqref{liencondition} that $b^* > 0$ if and only if~\eqref{condition-generale} is verified. Finally, recalling \eqref{h(0)partiel} and  using the definition of $Z_{q,p}$ in~\eqref{Zphi(p+q)}, we deduce that $Z_{q,p}^\prime(0+)=\Phi(p+q)-p W^{(q)}(0+)$. The result follows.

\end{proof}

\begin{rem}
In inequality~\eqref{condition-b^*}, the sign of $\Phi_K(q) - \Phi(p+q)$ plays an important role. Indeed, for example if $p>p_0$, or equivalently if $\Phi_K(q) < \Phi(p+q)$, then using~\eqref{C(K,p)extended} and elementary algebraic manipulations in \eqref{condition-b^*}, we deduce that $b^* > 0$ if and only if
\begin{equation}\label{condition-interesting}
\Phi(p+q)\left(\frac{\Phi(p+q)}{p} - W^{(q)}(0)\right)
> \Phi_K(q)\left(\frac{1}{K} - W^{(q)}(0)\right).
\end{equation}
\end{rem}

In the last proof, we have obtained that, if $b^*>0$, then it is the unique root/solution of
\begin{equation}\label{equationunique}
h_p(b) = Z_{q,p}'(b), \quad b > 0.
\end{equation}
From this, we deduce the following corollary to Proposition~\ref{performance-proposition}:
\begin{cor}\label{cor:vbetoile}
If $b^*>0$, then
\[
V_{b^*}(x) = \twopartdef{\frac{Z_{q,p}(x)}{Z_{q,p}^\prime(b^*)}}{x \leq b^*,}{\frac{Z_{q,p}(x) + K \int_{b^*}^x \mathbb{W}^{(q)}(x-y) \left( Z_{q,p}'(y)-Z_{q,p}^\prime(b^*) \right) \mathrm{d}y}{Z_{q,p}^\prime(b^*)}}{x \geq b^*.}
\]
\end{cor}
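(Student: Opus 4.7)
The plan is essentially a one-line substitution based on two ingredients already available in the paper. Proposition~\ref{performance-proposition} provides, for every $b \geq 0$, a closed-form expression for the performance function $V_b$ in terms of the function $h_p$ defined in~\eqref{definitionh}. Specializing that formula at $b = b^*$ yields an expression of the same shape as the one stated in the corollary, except that $h_p(b^*)$ appears in place of $Z_{q,p}'(b^*)$, both as the common denominator and inside the integrand on $\{x \geq b^*\}$.

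To pass from $h_p(b^*)$ to $Z_{q,p}'(b^*)$, I would invoke the first-order characterization of $b^*$ established in the proof of Proposition~\ref{specifications-sur-b^*}. There, the identity
\[
h_p'(b) = \Phi_K(q)\bigl(h_p(b) - Z_{q,p}'(b)\bigr)
\]
combined with the strict convexity of $h_p$ from Lemma~\ref{caracteristiques} shows that, under the standing hypothesis $b^* > 0$, the optimal threshold $b^*$ is the unique strictly positive solution of~\eqref{equationunique}, i.e., $h_p(b^*) = Z_{q,p}'(b^*)$. Substituting this equality into the formula for $V_{b^*}(x)$ obtained from Proposition~\ref{performance-proposition} replaces every occurrence of $h_p(b^*)$ by $Z_{q,p}'(b^*)$ and produces the expressions claimed on each of the pieces $\{x \leq b^*\}$ and $\{x \geq b^*\}$.

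There is no genuine obstacle: the corollary is merely a cleaner rewriting of Proposition~\ref{performance-proposition} at the optimal refraction level, made possible by the first-order optimality condition~\eqref{equationunique}. The only point worth noting is that the substitution inside the integral is legitimate because $h_p(b^*)$ is a constant with respect to the integration variable $y$, so it can be pulled through the integral sign without modification. Consequently, the proof reduces to a direct citation of Proposition~\ref{performance-proposition} and equation~\eqref{equationunique}.
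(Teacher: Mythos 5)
Your proposal is correct and matches the paper's (implicit) argument exactly: the corollary is obtained by specializing Proposition~\ref{performance-proposition} to $b=b^*$ and substituting $h_p(b^*)=Z_{q,p}'(b^*)$, which holds for $b^*>0$ by the first-order condition $h_p'(b^*)=0$ together with \eqref{liencondition}, i.e., equation~\eqref{equationunique}. Nothing further is needed.
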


It is interesting to note that $V_{b^*}$ is continuously differentiable at $x=b^*$ and that $V_{b^*}^\prime(b^*)=1$.

\subsection{Relationships with the limiting control problems}\label{sect:relationships}

We conclude this section by showing that the previous results generalize those obtained for the two limiting control problems considered in \cite{kyprianou-et-al_2012}) and in \cite{renaud_2019}. On one hand, if $p \to \infty$, then the delays become negligible, which means we are back to classical ruin. Mathematically, when $p \to \infty$, we have
\begin{equation*}
\frac{\left(\Phi(p+q)\right)^2}{p} - \Phi(p+q)W^{(q)}(0) = \frac{\Phi(p+q)}{p}Z_{q,p}^\prime(0+) \longrightarrow W^{(q) \prime}(0+),
\end{equation*}
which means that~\eqref{condition-interesting} becomes
\[
W^{(q) \prime}(0+) > \Phi_K(q) \left(\frac{1}{K} - W^{(q)}(0)\right).
\]
The latter coincides with Lemma 3 in \cite{kyprianou-et-al_2012}.

On the other hand, if $K \to \infty$ (and the underlying process is not of BV), then the dividend rates are allowed to take very large values as in the singular control problem. Mathematically, when $K \to \infty$, note that $h_p(0)$ increases (resp.\ decreases) to $Z_{p,q}^\prime(0+)$ if and only if $Z_{q,p}''(0+) < 0$ (resp.\ $Z_{q,p}''(0+) > 0$). In that case, \eqref{condition-generale} becomes
\[
Z_{q,p}''(0+) < 0 .
\]
The latter coincides with Proposition 2 in \cite{renaud_2019}.

\section{Verification of optimality}\label{verificationsection}

In Section~\ref{refractionsection}, we identified a candidate optimal strategy for our control problem, namely the refraction strategy at optimal level $b^*$. As described in the introduction, we will now prove that $\pi_{b^*}$ is indeed an optimal admissible strategy using a verification lemma.

First, let us define a class of functions:
\begin{dfn}
Let $\cS$ to be the class of functions $g \colon \R \to \R$ such that:
\begin{enumerate}
\item $g$ is sufficiently smooth, i.e., $g \in C^1(\R) \cap C^2(\R \setminus \{0\})$ when $X$ has paths of UBV, and $g \in C^0(\R) \cap C^1(\R \setminus \{0\})$ when $X$ has paths of BV;
\item $g'(0-) \geq g'(0+)$;
\item $g$ can be written as the difference of two convex functions.
\end{enumerate}
\end{dfn} 

In what follows, for a function $g \in \cS$,  we define $g^\prime(0) = g^\prime(0-)$ when $X$ has paths of BV, and $g^{\prime \prime}(0) = g^{\prime \prime}(0-)$ when $X$ has paths of UBV. As a consequence, $g^\prime \colon \R \to \R$ (resp.\  $g^{\prime \prime} \colon \R \to \R$) is now well defined. Finally, let $\Gamma$ be the differential operator acting on functions in $\cS$ and given by
\begin{equation}\label{def:gamma}
\Gamma g(x) = \gamma g^\prime(x) + \frac{\sigma^2}{2} g^{\prime \prime}(x) + \int_{(0, \infty)} \left(g(x-z) - g(x) + g^\prime(x) z \mathbf{1}_{(0, 1]} \right) \nu(\mathrm{d}z), \quad x \in \R .
\end{equation}


\begin{lem}
If the L\'evy measure of $X$ has a completely monotone density, then $V_{b^*} \in \cS$.
\end{lem}

\begin{proof}
First, assume that $b^* > 0$ and let us prove that $V_{b^*}$ is sufficiently smooth.

From~\eqref{Zphi(p+q)}, we can write
\begin{equation*}
Z_{p,q}'(x) =
\begin{cases}
\Phi(p+q)Z_{p,q}(x) & \text{if $x<0$,}\\
\Phi(p+q)Z_{p,q}(x) - p W^{(q)}(x) & \text{if $x>0$,}
\end{cases}
\end{equation*}
and
\begin{equation*}
Z_{p,q}''(x) = \twopartdef{\Phi^2(p+q)Z_{p,q}(x)}{x < 0,}{\Phi^2(p+q)Z_{p,q}(x) - p\Phi(p+q)W^{(q)}(x) - pW^{(q) \prime}(x)}{x > 0.}
\end{equation*}
Recall that, since the Lévy measure has a density, it is known that $W^{(q)}$ is continuously differentiable on $(0,\infty)$. Clearly, we have that $Z_{p,q}$ is sufficiently smooth.

From the above, we deduce that $Z_{p,q}'(0-) = \Phi(p+q)$, $Z_{p,q}'(0+) = \Phi(p+q) - p W^{(q)}(0+)$ and $Z_{p,q}''(0-) = \Phi^2(p+q)$. Using~ \eqref{Wen0}, we can conclude that $Z_{p,q} \in \cS$.

Then, from Corollary~\ref{cor:vbetoile}, we get that $V_{b^*}$ is sufficiently smooth. Also, it is clear that $V_{b^*}'(0-) \geq V_{b^*}'(0+)$.

Now, assume that $b^* = 0$. It is clear from the previous analysis that $V_0$ is sufficiently smooth. Also, we have
\begin{equation*}
V_0'(0-) = \frac{Z_{p,q}'(0-)}{h_p(0)}
\end{equation*}
and
\begin{equation*}
V_0'(0+) = \frac{Z_{p,q}'(0+)}{h_p(0)} + \frac{K \bbW^{(q)}(0)}{h_p(0)}\left(Z_{p,q}'(0+) - h_p(0)\right).
\end{equation*}  
Recalling that $b^* = 0$ if and only if $h_p(0) \geq Z_{q,p}'(0+)$ (see~\eqref{condition-generale}), it follows that $V_{0}'(0-) \geq V_{0}'(0+)$.
Finally, following the same line of reasoning as above, we also have that $V_{0}$ can be written as the difference of two convex functions, which concludes the proof.
\end{proof}

Here is the verification lemma of our control problem.

\begin{lem}\label{verificationlemma}
For $\hat{\pi} \in \Pi_K$, if its performance function $V_{\hat{\pi}}$ belongs to $\cS$ and is such that, for all $x \in \R$, 
\begin{equation}\label{verlemmaeq}
\sup_{0 \leq u \leq K} \left[ \left( \Gamma - q - p \mathbf{1}_{(-\infty, 0)}(x) \right) V_{\hat{\pi}}(x) + u(1 - V_{\hat{\pi}}^\prime(x)) \mathbf{1}_{[0, \infty)}(x)\right] \leq 0 ,
\end{equation}
where $\Gamma$ is defined in~\eqref{def:gamma}, then $\hat{\pi}$ is an optimal strategy.
\end{lem}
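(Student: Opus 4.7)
My plan is to adopt the standard martingale-verification machinery used for optimal dividends problems in SNLP models, exploiting the fact that exponential Parisian delays are equivalent to an additional killing of the value functional at rate $p$ while the controlled process is below zero. Fix an arbitrary admissible $\tilde\pi \in \Pi_K$. Conditioning on the trajectory of $X$ and integrating out the independent Parisian exponential clocks of rate $p$ attached to each negative excursion of $U^{\tilde\pi}$ gives the representation
\[
V_{\tilde\pi}(x) = \mathbf{E}_x\left[\int_0^\infty e^{-A_s^{\tilde\pi}} l_s^{\tilde\pi}\,\mathrm{d}s\right], \qquad A_t^{\tilde\pi} := qt + p\int_0^t \mathbf{1}_{(-\infty,0)}(U_s^{\tilde\pi})\,\mathrm{d}s,
\]
since $\exp(-p\int_0^t \mathbf{1}_{(-\infty,0)}(U_s^{\tilde\pi})\,\mathrm{d}s)$ is exactly the conditional probability, given the path of $X$, that no Parisian killing has occurred by time $t$ (so that $V_{\tilde\pi}$ defined via $\kappa_p^{\tilde\pi}$ in~\eqref{definition-performance-function} coincides with the display above).

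Next, I apply the appropriate change-of-variable formula (It\^o--Meyer) to $e^{-A_t^{\tilde\pi}} V_\pi(U_t^{\tilde\pi})$, using $\mathrm{d}U_t^{\tilde\pi} = \mathrm{d}X_t - l_t^{\tilde\pi}\,\mathrm{d}t$. The smoothness hypothesis on $V_\pi$ ($C^1$ in the bounded variation case, $C^2$ in the unbounded variation case) is exactly what is needed so that the resulting drift can be written in terms of the operator $\Gamma - q - p\mathbf{1}_{(-\infty,0)}$, yielding
\[
e^{-A_t^{\tilde\pi}} V_\pi(U_t^{\tilde\pi}) = V_\pi(x) + \int_0^t e^{-A_s^{\tilde\pi}}\left[\left(\Gamma - q - p\mathbf{1}_{(-\infty,0)}\right)V_\pi(U_s^{\tilde\pi}) - l_s^{\tilde\pi} V_\pi^\prime(U_s^{\tilde\pi})\right]\mathrm{d}s + M_t,
\]
where $M$ is a local martingale gathering the Brownian and compensated-Poisson contributions. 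Applying the hypothesis~\eqref{verlemmaeq} at $x = U_s^{\tilde\pi}$ with $u = l_s^{\tilde\pi} \in [0,K]$, and using the admissibility constraint $l_s^{\tilde\pi} \mathbf{1}_{\{U_s^{\tilde\pi} < 0\}} \equiv 0$ (so that $l_s^{\tilde\pi} V_\pi^\prime(U_s^{\tilde\pi}) = l_s^{\tilde\pi} V_\pi^\prime(U_s^{\tilde\pi})\mathbf{1}_{[0,\infty)}(U_s^{\tilde\pi})$), the bracket above is pointwise bounded by $-l_s^{\tilde\pi}$. Rearranging,
\[
\int_0^t e^{-A_s^{\tilde\pi}} l_s^{\tilde\pi}\,\mathrm{d}s + e^{-A_t^{\tilde\pi}} V_\pi(U_t^{\tilde\pi}) \leq V_\pi(x) + M_t.
\]

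To conclude, I would localize $M$ by stopping times $T_n := \inf\{t \geq 0 : |U_t^{\tilde\pi}| > n\}$, take $\mathbf{E}_x$ on both sides at $t \wedge T_n$ to kill the resulting zero-mean martingale part, and discard the nonnegative boundary term (noting that $V_\pi \geq 0$, as the discounted expectation of a nonnegative integrand, while the a priori estimate $V_\pi \leq K/q$ offers the domination needed in the limits). Monotone convergence as $n \to \infty$ and then $t \to \infty$ delivers $V_{\tilde\pi}(x) \leq V_\pi(x)$; since $\tilde\pi \in \Pi_K$ was arbitrary and $V_\pi(x) \leq V(x)$ trivially, this forces $V_\pi = V$ and hence the optimality of $\pi$. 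The main delicate point is the localization step: one has to verify that the jump part of $M$ is genuinely a local martingale under only the stated smoothness, and then justify uniform control of the boundary term $\mathbf{E}_x[e^{-A_{t\wedge T_n}^{\tilde\pi}} V_\pi(U_{t\wedge T_n}^{\tilde\pi})]$ when letting $n \to \infty$—both routine in this literature but requiring some care in the unbounded variation case.
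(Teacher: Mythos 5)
Your proposal is correct and is essentially the proof the paper has in mind: the paper leaves the details to the reader, pointing to Lemma 4 of \cite{kyprianou-et-al_2012} and Lemma 1 of \cite{renaud_2019}, which are precisely the two ingredients you combine (the occupation-time representation of exponential Parisian ruin as killing at rate $p$ below zero, and the It\^o--Meyer/HJB drift bound with localization, using $0\leq V_\pi\leq K/q$). No substantive deviation from the intended argument.
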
 

\begin{proof}
Let $\pi$ be an arbitrary admissible strategy. For simplicity, in what follows, we set $g := V_{\hat{\pi}}$, $U := U^\pi$ and $l := l^\pi$. Since $U$ is a semimartingale and since $g \in \cS$, we can apply the Meyer-Ito formula (see Theorem 70 in \cite{protter_2005}) to obtain
\begin{multline*}
g(U_t) = g(U_0) + \int_{0+}^t g'(U_{s-}) \mathrm{d}U_s + \frac{1}{2} \int_{-\infty}^\infty L_t^y \mu(\dd y) \\
+ \sum_{0 < s \leq t} \left(g(U_s) - g(U_{s-}) - g'(U_{s-}) \Delta U_s \right),
\end{multline*}
where $\mu$ is the weak second derivative of $g$ and $L^y$ is the semimartingale local time at $y$ of $U$. In particular, we have $\mu(\dd y)=g''(y) \dd y + (g'(0+)-g'(0-)) \delta_0 (\dd y)$, in which the value of $g''(0)$ is not needed. If $\sigma>0$, then by the occupation time formula we have
\[
\int_{-\infty}^\infty L_t^y \mu(\dd y) = \sigma^2 \int_0^t g''(U_s) \dd s ,
\]
and, if $X$ has paths of BV, then
\[
\int_{-\infty}^\infty L_t^y \mu(\dd y) = (g'(0+)-g'(0-)) L_t^0 .
\]
Otherwise, this integral is equal to zero. Merging all those cases together, we can write
\[
\int_{-\infty}^\infty L_t^y \mu(\dd y) = \sigma^2 \int_0^t g''(U_s) \dd s - p W^{(q)}(0+) L_t^0 .
\]

Now, defining
\begin{equation*}
Y_t = e^{-qt - p \int_0^t \I{U_s<0} \dd s}
\end{equation*}
and performing a stochastic integration by parts, we can write
\begin{multline*}
Y_t g(U_t) = g(U_0) + \int_0^t Y_s (\Gamma - q - p\I{U_s < 0})g(U_s) \dd s + \int_0^t Y_s \left( 1 - g'(U_s) \right) l_s \dd s \\
- \int_0^t Y_s l_s \dd s - \frac{pW(0+)}{2} \int_{0+}^t Y_s \dd L^0_s + \int_{0+}^t Y_s \dd M_s ,
\end{multline*}
where
\begin{align*}
M_t &:= \int_{0+}^t g'(U_{s-}) \dd \left[X_s - \gamma s - \sum_{0 < u \leq s} \Delta X_u \I{|\Delta X_u| \geq 1} \right] \\
& \quad + \sum_{0 < s \leq t} \left[g(U_{s-} + \Delta X_s) - g(U_{s-}) - g'(U_{s-}) \Delta X_s \I{|\Delta X_s| < 1}\right] \\
& \qquad - \int_{0}^t \int_{0+}^\infty \left[g(U_{s-} - y) - g(U_{s-}) + g'(U_{s-}) y \I{0 < y < 1}\right] \nu(\dd y) \dd s
\end{align*}
is a (local) martingale. Since $g$ satisfies~\eqref{verlemmaeq}, we have
\begin{equation}\label{local}
0 \leq Y_t g(U_t) \leq g(U_0) - \int_0^t Y_s l_s \dd s + \tilde{M}_t ,
\end{equation}
where $\tilde{M}_t := \int_{0+}^t Y_s \dd M_s$ is a (local) martingale. 

As in \cite{renaud_2019}, we have that, for all $x \in \R$,
\begin{equation*}
\esub{x}\left[\int_0^t Y_s l_s \dd s \right] = \esub{x}\left[\int_0^{t \wedge \kappa_p} e^{-qs} l_s \dd s \right] .
\end{equation*}
Choosing a localizing sequence $\set{T_n : n \geq 0}$ for $\tilde{M}$ and taking expectations (with $x \in \R$) on each side of~\eqref{local}, we have
\begin{equation}\label{loeffenargument}
0 \leq g(x) - \esub{x}\left[\int_{0}^{t \wedge T_n \wedge \kappa_p} e^{-qs} l_s \dd s \right] .
\end{equation}
Taking the limits when $t,n \to \infty$, we get that $g(x) \geq V_\pi(x)$ for all $x \in \R$. Since $\pi \in \Pi_K$ is arbitrary, the result follows.
\end{proof}

The last step consists in proving that $V_{b^*}$, the performance function of the refraction strategy at level $b^*$, satisfies the (sufficient) conditions given in the verification lemma. 

\begin{lem}\label{analyseV}
If the L\'evy measure of $X$ has a completely monotone density, then
\begin{equation}\label{troisequations}
\threepartdef{(\Gamma - q - p)V_{b^*}(x) = 0,}{x < 0,}{(\Gamma - q)V_{b^*}(x) = 0,}{0 \leq x \leq b^*,}{(\Gamma - q)V_{b^*}(x) + K\left(1 - V_{b^*}^\prime(x)\right) = 0,}{x > b^*.}
\end{equation}
\end{lem}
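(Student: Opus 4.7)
The plan is to prove the lemma in two stages: first establish that $V_{b^*}$ is sufficiently smooth, then verify each of the three operator identities in \eqref{troisequations}.

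\textbf{Smoothness.} Under the completely monotone density assumption, I would invoke Lemma~\ref{scalemonotone} applied to both $X$ and $Y = X - Kt$ to get that $W^{(q)}$ and $\mathbb{W}^{(q)}$ are infinitely differentiable on $(0,\infty)$. This immediately yields that $Z_{q,p}$ is $C^\infty$ on each of $(-\infty,0)$ and $(0,\infty)$, and hence $V_{b^*}$ is $C^\infty$ on each of $(-\infty,0)$, $(0,b^*)$, and $(b^*,\infty)$. What remains is matching at the two interface points. At $x = b^*$, Leibniz's rule applied to the integral in Corollary~\ref{cor:vbetoile}, combined with the vanishing of the integrand $Z_{q,p}'(y) - Z_{q,p}'(b^*)$ at $y = b^*$, shows that both the integral and its first derivative vanish at $x = b^*$; this yields $V_{b^*}'(b^*{+}) = Z_{q,p}'(b^*)/Z_{q,p}'(b^*) = 1 = V_{b^*}'(b^*{-})$, and higher derivatives match similarly. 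At $x = 0$, continuity is immediate, and the plan is to handle the extra regularity via the exponential-minus-completely-monotone decomposition \eqref{difference}, following the approach of \cite{kyprianou-et-al_2012}. The case $b^* = 0$ will be treated analogously, starting from the explicit formula for $V_0$ given after Proposition~\ref{performance-proposition}.

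\textbf{First two equations.} The main tool here is the generator identity $(\Gamma - q - p\mathbf{1}_{(-\infty,0)})Z_{q,p}(x) = 0$ for $x \neq 0$, which I would obtain either from the Feynman--Kac interpretation underlying \eqref{upwardparisien} or by direct verification from the integral representation \eqref{Zphi(p+q)}. Since $V_{b^*}(x) = Z_{q,p}(x)/Z_{q,p}'(b^*)$ (or $Z_{q,p}(x)/h_p(0)$ when $b^* = 0$) throughout $(-\infty, b^*]$, dividing this identity by the relevant constant immediately gives $(\Gamma - q - p)V_{b^*} = 0$ on $(-\infty, 0)$ and $(\Gamma - q)V_{b^*} = 0$ on $(0, b^*)$. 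As a sanity check on the former: on $(-\infty, 0]$, $Z_{q,p}(x) = e^{\Phi(p+q)x}$, and $\Gamma e^{\lambda x} = \psi(\lambda) e^{\lambda x}$, so $(\Gamma - q - p)e^{\Phi(p+q)x} = (\psi(\Phi(p+q)) - p - q)e^{\Phi(p+q)x} = 0$.

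\textbf{Third equation and main obstacle.} For $x > b^*$, the target identity is equivalent to $(\Gamma_Y - q)V_{b^*}(x) = -K$, where $\Gamma_Y := \Gamma - K\partial_x$ is the generator of $Y$. The plan is to decompose $V_{b^*}$ on $(b^*, \infty)$ as $A(x) + B(x)$, with $A(x) = Z_{q,p}(x)/Z_{q,p}'(b^*)$ and $B$ the $\mathbb{W}^{(q)}$-convolution piece. Applying $(\Gamma_Y - q)$ to $A$ uses the identity $(\Gamma - q)Z_{q,p} = 0$ on $(0,\infty)$ established in the previous step and contributes $-K Z_{q,p}'(x)/Z_{q,p}'(b^*)$. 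Applying $(\Gamma_Y - q)$ to $B$ requires commuting the operator through the integral and exploiting the analogous identity $(\Gamma_Y - q)\mathbb{W}^{(q)}(y) = 0$ for $y > 0$, together with the boundary value $\mathbb{W}^{(q)}(0) = (c-K)^{-1}$ in the bounded variation case and $\mathbb{W}^{(q)}(0) = 0$ in the unbounded variation case. The hard part will be the careful bookkeeping of the endpoint contributions at $y = b^*$ and $y = x$: the $Z_{q,p}'(x)$ terms must cancel against the contribution from $A$, and the surviving constant must reduce to $-K$. In the unbounded variation case, $V_{b^*}$ is only $C^2$, so the formal differentiations under the integral will be justified by a bounded-variation approximation argument in the spirit of the one used in the proof of Proposition~\ref{performance-proposition}.
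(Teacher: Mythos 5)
Your overall route is essentially the one the paper takes by citation: smoothness via the arguments for Lemma 5 of \cite{kyprianou-et-al_2012} with $W^{(q)}$ replaced by $Z_{q,p}$, the first two identities from the harmonicity of $Z_{q,p}$ (as in \cite{renaud_2019}), and the third identity from a refraction-type generator computation as in Lemma 6 of \cite{kyprianou-et-al_2012}. Your bookkeeping for the third equation does close up: with $A(x)=Z_{q,p}(x)/Z_{q,p}'(b^*)$ and $B(x)=K\int_{b^*}^x \mathbb{W}^{(q)}(x-y)\left(Z_{q,p}'(y)-Z_{q,p}'(b^*)\right)\mathrm{d}y\,/\,Z_{q,p}'(b^*)$ (which vanishes for $x\leq b^*$, so the decomposition is global and the nonlocal operator may be applied termwise), one gets $(\Gamma_Y-q)A(x)=-K Z_{q,p}'(x)/Z_{q,p}'(b^*)$ and the endpoint analysis of $B$ must (and does) produce $K\left(Z_{q,p}'(x)-Z_{q,p}'(b^*)\right)/Z_{q,p}'(b^*)$, so the sum is $-K$, as required.

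The genuine gap is your treatment of the interface at $x=0$. You plan to obtain matching of derivatives across $0$ "via \eqref{difference}", but no such matching holds, so this step would fail no matter how it is implemented: from \eqref{Zphi(p+q)}, $Z_{q,p}'(0+)=\Phi(p+q)-pW^{(q)}(0)$ while $Z_{q,p}'(0-)=\Phi(p+q)$, so when $X$ has paths of bounded variation $V_{b^*}'$ genuinely jumps at $0$ by $-pW^{(q)}(0)/Z_{q,p}'(b^*)=-p/\bigl(c\,Z_{q,p}'(b^*)\bigr)$; likewise, in the unbounded variation case $Z_{q,p}''(0+)-Z_{q,p}''(0-)=-pW^{(q)\prime}(0+)$, which equals $-2p/\sigma^2$ when $\sigma>0$ (and is $-\infty$ when $\sigma=0$), so $V_{b^*}$ is not twice continuously differentiable across $0$ either. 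What can be proved, and what the verification procedure actually uses, is regularity on $(0,\infty)$ — in particular continuous differentiability (resp.\ twice continuous differentiability) at $b^*$ and beyond — together with continuity at $0$; this is how the cited sources proceed, and your argument should be reformulated accordingly rather than attempting derivative matching at $0$. A related, more minor, imprecision: "higher derivatives match similarly" at $b^*$ is only true (and only needed) in the unbounded variation case, where it follows from $\mathbb{W}^{(q)}(0)=0$ after an integration by parts moving the derivative onto $Z_{q,p}'$ (needed to avoid $\mathbb{W}^{(q)\prime}(0+)=\infty$ when $\sigma=0$); in the bounded variation case $V_{b^*}''$ jumps at $b^*$ by $K\mathbb{W}^{(q)}(0)Z_{q,p}''(b^*)/Z_{q,p}'(b^*)$, which is harmless since only $C^1$ is required there.
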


\begin{proof}
The first two equations of~\eqref{troisequations} have already been verified in \cite{renaud_2019}. To prove the third equality, it suffices to follow the steps of the proof of Lemma 6 in \cite{kyprianou-et-al_2012} in which the time of classical ruin is replaced by the Parisian ruin time.
\end{proof}

As the final step of our verification procedure, we show that $V_{b^*}$ is a concave function on $[0,\infty)$, which is stronger than what is needed.  This is where the assumption of complete monotonicity of the density of the L\'evy measure is of paramount importance. Indeed, we will use a result of Lemma~\ref{scalemonotone}, saying that the $q$-scale function can be written as the difference of an exponential function and a completely monotone function, together with Bernstein's theorem.

\begin{prop}\label{hardest}
If the L\'evy measure of $X$ has a completely monotone density, then $V_{b^*}$ is a concave function on $[0,\infty)$.
\end{prop}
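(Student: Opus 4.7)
The plan is to show $V_{b^*}'' \leq 0$ on $(0,\infty)\setminus\{b^*\}$; combined with the $C^1$-fit $V_{b^*}'(b^*-)=V_{b^*}'(b^*+)=1$ recorded right after Corollary~\ref{cor:vbetoile} and the smoothness granted by Lemma~\ref{analyseV}, this yields concavity on $[0,\infty)$. The piecewise form of $V_{b^*}$ forces a case split at $b^*$.

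On $[0,b^*]$ (when $b^*>0$), we have $V_{b^*}(x)=Z_{q,p}(x)/Z_{q,p}'(b^*)$, so $V_{b^*}''(x)=Z_{q,p}''(x)/Z_{q,p}'(b^*)$. Proposition~\ref{specifications-sur-b^*} gives $b^*\le c^*$, and the discussion following Lemma~\ref{lemma:log-convexity} shows that $Z_{q,p}'$ is non-increasing on $[0,c^*]$ under the hypothesis. Hence $Z_{q,p}''\le 0$ there and this piece is immediate.

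On $[b^*,\infty)$, differentiating the expression from Corollary~\ref{cor:vbetoile} once and integrating by parts (to absorb the boundary contribution at $y=x$ produced by $\mathbb{W}^{(q)}(0)$) gives $Z_{q,p}'(b^*)V_{b^*}'(x)=Z_{q,p}'(x)+K\int_{b^*}^{x}\mathbb{W}^{(q)}(x-y)Z_{q,p}''(y)\,dy$; differentiating a second time yields
\begin{equation*}
Z_{q,p}'(b^*)\,V_{b^*}''(x) = \bigl(1+K\mathbb{W}^{(q)}(0)\bigr)\,Z_{q,p}''(x) + K\!\int_{b^*}^{x}\mathbb{W}^{(q)\prime}(x-y)\,Z_{q,p}''(y)\,dy .
\end{equation*}
For $x\in[b^*,c^*]$ the right-hand side is manifestly non-positive since $Z_{q,p}''\le 0$ throughout the integration range. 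The delicate regime is $x>c^*$, where $Z_{q,p}''$ changes sign inside the integral. To treat it, I would invoke Lemma~\ref{scalemonotone} to decompose $W^{(q)}(x)=e^{\Phi(q)x}/\psi'(\Phi(q))-f(x)$ with $f$ completely monotone, translate this through the definition of $Z_{q,p}$ into a parallel decomposition of $Z_{q,p}''$ (two positive exponentials minus a completely monotone function), and obtain an analogous representation for $\mathbb{W}^{(q)\prime}$ associated to $Y=X-Kt$. Bernstein's theorem then writes every completely monotone ingredient as a Laplace transform of a positive measure, and after collecting the resulting exponential-in-$x$ pieces, the defining equation~\eqref{equationunique} of $b^*$, namely $h_p(b^*)=Z_{q,p}'(b^*)$, is precisely the identity that cancels those exponential contributions and exposes a manifestly non-positive remainder.

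The main obstacle is exactly this final reduction for $x>c^*$: keeping the two scale-function families (those of $X$ and of $Y$) simultaneously in play through the Parisian kernel $Z_{q,p}$, while extracting the cancellation powered by~\eqref{equationunique}, is what makes the argument more involved than its counterpart in~\cite{kyprianou-et-al_2012} and is where the full completely monotone density hypothesis (and not merely log-convexity of $W^{(q)\prime}$) is genuinely used. In the special case $b^*=0$, the region $[0,b^*]$ is empty and the case split disappears entirely: the characterization of $b^*=0$ in Proposition~\ref{specifications-sur-b^*}, combined with the identity $h_p'(b)=\Phi_K(q)(h_p(b)-Z_{q,p}'(b))$ recorded in its proof, yields the needed inequality at the origin and propagates it, via monotonicity of the Parisian kernels, into a shorter direct proof of concavity on $[0,\infty)$. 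This is the alternative route for $b^*=0$ flagged in the introduction.
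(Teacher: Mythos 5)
Your setup is correct as far as it goes: the case $x\in[0,b^*]$ via $b^*\le c^*$, the identity $Z_{q,p}'(b^*)\,V_{b^*}''(x)=(1+K\mathbb{W}^{(q)}(0))Z_{q,p}''(x)+K\int_{b^*}^{x}\mathbb{W}^{(q)\prime}(x-y)Z_{q,p}''(y)\,\mathrm{d}y$ (where the boundary term disappears precisely because of~\eqref{equationunique}), and the non-positivity of this expression for $x\in[b^*,c^*]$ all check out, and the tools you name (Lemma~\ref{scalemonotone}, Bernstein's theorem) are the right ones. But the proof stops exactly where the difficulty begins. For $x>c^*$ you only announce a plan, and its key claim --- that after decomposing the scale functions and collecting exponentials, the first-order condition~\eqref{equationunique} cancels the exponential pieces and leaves a ``manifestly non-positive remainder'' --- is not correct and is the entire content of the proposition. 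Writing $\mathbb{W}^{(q)}(x)=e^{\Phi_K(q)x}/\psi_K'(\Phi_K(q))-f(x)$ with $f(x)=\int_0^\infty e^{-xt}\mu(\mathrm{d}t)$ leads to $V_{b^*}'(x)=Ce^{\Phi_K(q)x}+\int_0^\infty e^{-xt}u_{b^*}(t)\,\mu(\mathrm{d}t)$ for an explicit density $u_{b^*}$. The constant $C$ does vanish, but not by~\eqref{equationunique}: in the paper it is killed by the boundedness $V_{b^*}\le K/q$ (and in fact it vanishes for every threshold $b$, being built into the definition of $h_p$). More importantly, $u_{b^*}$ is \emph{not} of one sign: $u_{b^*}(0)=K>0$ while it may turn negative for large $t$, so the remaining integral for $V_{b^*}''$ is not manifestly non-positive. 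What is actually needed --- and what the paper proves --- is a single-crossing property of $u_{b^*}$, obtained by bounding $u_{b^*}'$ and $u_{b^*}''$ using the monotonicity of $Z_{q,p}'$ on $[0,b^*]$ and a dichotomy on the sign of the constant $A_{b^*}$; even then one only gets $V_{b^*}''(x)\le e^{-(x-b^*)t_0}V_{b^*}''(b^*+)$, and the proof closes by checking $V_{b^*}''(b^*+)=(1+K\mathbb{W}^{(q)}(0))Z_{q,p}''(b^*)/h_p(b^*)\le 0$, again via $b^*\le c^*$. None of this sign analysis appears in your argument.

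The $b^*=0$ case has the same problem. Your appeal to ``monotonicity of the Parisian kernels'' is not a proof: one can have $b^*=0$ with $c^*>0$, so $Z_{q,p}''$ still changes sign and the regime $x>c^*$ remains delicate; also Corollary~\ref{cor:vbetoile} (and hence your clean second-derivative formula) is only valid for $b^*>0$ --- for $b=0$ the term $K\mathbb{W}^{(q)\prime}(x)\bigl(Z_{q,p}'(0)-h_p(0)\bigr)$ survives. The paper's alternative argument for $b^*=0$ is again run inside the Bernstein representation: the failure of~\eqref{condition-generale} (i.e.\ $h_p(0)\ge Z_{q,p}'(0)$), together with $KW^{(q)}(0)<1$ from Remark~\ref{remark-drift} and~\eqref{Wen0}, shows $u_0(t)\ge 0$ for all $t$, whence $V_0''\le 0$ directly. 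In short, your outline coincides with the paper's strategy, but the decisive step --- the sign analysis of the Bernstein density $u_{b^*}$ and the comparison with $V_{b^*}''(b^*+)$ --- is missing, and the mechanism you propose in its place would not deliver it.
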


A proof of this last proposition is given in Appendix~\ref{proof-of-hardest-prop}.

In conclusion, here is the full solution to our stochastic control problem.

\begin{thm}\label{mainresult2}
Fix constants $p, q, K > 0$. If the L\'evy measure of $X$ has a completely monotone density, then:
\begin{enumerate}
\item if condition~\eqref{condition-b^*} is verified, then the refraction strategy at level $b^*>0$ given by~\eqref{equationunique} is optimal and
\[
V(x) = \twopartdef{\frac{Z_{q,p}(x)}{Z_{q,p}^\prime(b^*)}}{x \leq b^*,}{\frac{Z_{q,p}(x) + K \int_{b^*}^x \mathbb{W}^{(q)}(x-y) \left( Z_{q,p}'(y)-Z_{q,p}^\prime(b^*) \right) \mathrm{d}y}{Z_{q,p}^\prime(b^*)}}{x \geq b^*;}
\]

\item otherwise, if condition~\eqref{condition-b^*} is not verified, then the refraction strategy at level $b^*=0$ is optimal and
\[
V(x) =
\begin{cases}
\frac{\Phi_K(q) - \Phi(p+q)}{\Phi_K(q) \left(\Phi(p+q) - p/K \right)} e^{\Phi(p+q) x} & \text{if $x \leq 0$,}\\
-K \int_0^{x} \mathbb{W}^{(q)}(y) \mathrm{d}y & \\
\qquad + \frac{\Phi_K(q) - \Phi(p+q)}{\Phi_K(q) \left(\Phi(p+q) - p/K \right)} \left( Z_{q,p}(x) + K\int_0^x \mathbb{W}^{(q)}(x-y)Z_{q,p}'(y) \mathrm{d}y \right) & \text{if $x \geq 0$.}
\end{cases}
\]
\end{enumerate}
\end{thm}

\section{Sensitivity analysis of the optimal refraction level}\label{sensitivity}

In this section, we analyze the sensitivity of the optimal dividend strategy with respect to some of the parameters, especially the Parisian delay $p$ and the payment rate bound $K$. 

Recall from Section~\ref{sect:relationships} that, if $p$ is large, then the delays are small, which means the Parisian ruin time will likely occur not long after the classical ruin time. Consequently, in a model for which our solution applies, it is expected then that $b^*$ will be close to $b^*_\infty$, the optimal refraction level obtained in \cite{kyprianou-et-al_2012}.

Conversely, if $p$ is small, then delays are large, which means that (Parisian) ruin is less likely. Thus, it is expected that $b^*$ will be close to zero as the risk of (Parisian) ruin is very low.

Finally, recall also from Section~\ref{sect:relationships} that, if $K$ is large, then the payment rates are allowed to take large values which will make the optimal refraction strategy with level $b^*$ close to the optimal strategy of the corresponding singular control problem studied in \cite{renaud_2019}, which is the barrier strategy at level $c^*$. Recall that we have shown in Proposition~\ref{specifications-sur-b^*} that $b^* \leq c^*$.

Let us illustrate all of this in the following two models (each satisfying our standing assumption on the Lévy measure): a Brownian risk model and a Cram\'er-Lundberg model with exponentially distributed claims.

\subsection{Brownian risk model}

Let $X$ be a Brownian motion with drift, i.e.,
\begin{equation*}
X_t = X_0 + \mu t + \sigma B_t, \quad t \geq 0 ,
\end{equation*}
where $B = \set{B_t : t \geq 0}$ is a standard Brownian motion, and where $\mu \in \R$ and $\sigma > 0$.

Recall that the optimal refraction level $b^*$ is the minimum of $h_p(b)$, which is defined in \eqref{definitionh}. Straightforward computations lead to \begin{equation*}
h_p(b) = D_1 e^{\lambda_1 b} + D_2 e^{-\lambda_2 b}, \quad b \geq 0,
\end{equation*}
where
\begin{equation*}
\lambda_1 = \frac{\Delta - \mu}{\sigma^2}, \qquad \lambda_2 = \frac{\Delta + \mu}{\sigma^2},
\end{equation*}
where
\begin{equation*}
D_1 = \frac{p \lambda_1 \Phi_K(q)}{\Delta (\Phi(p+q) - \lambda_1) (\Phi_K(q) - \lambda_1)}, \qquad D_2 = \frac{p \lambda_2 \Phi_K(q)}{\Delta (\Phi(p+q) + \lambda_2) (\Phi_K(q) + \lambda_2)},
\end{equation*}
and where $\Delta = \sqrt{\mu^2 + 2\sigma^2 q}$. Note that $\Phi(q) = \lambda_1$.

First, in Figure~\ref{fig:pBrownian}, we see that $p \mapsto b^*$ increases from zero to $b^*_\infty$ when $p$ increases, as discussed above.

\begin{figure}[h!]
\centering
\textbf{Optimal level as a function of the Parisian rate in a Brownian risk model}\par
\includegraphics{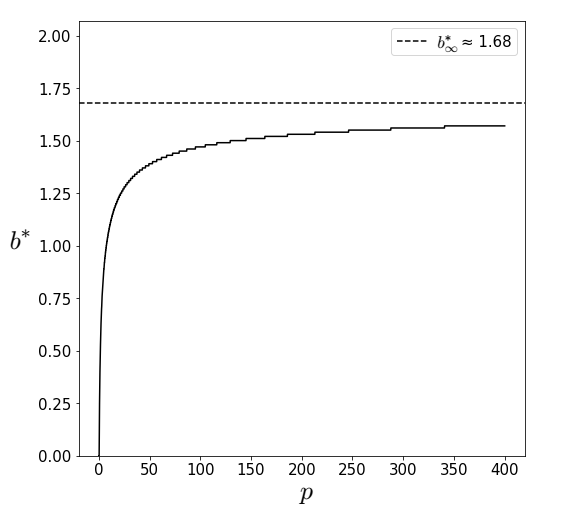}
\caption{Model and problem parameters: $\mu = 11, \sigma = 3, q = 1, K = 10$.}
\label{fig:pBrownian}
\end{figure}

Second, in Figure~\ref{fig:kBrownian}, we see that $K \mapsto b^*$ increases toward $c^*$ when $K$ increases, as discussed above.

\begin{figure}[h!]
\centering
\textbf{Optimal level as a function of the control rate bound in a Brownian risk model}\par
\includegraphics{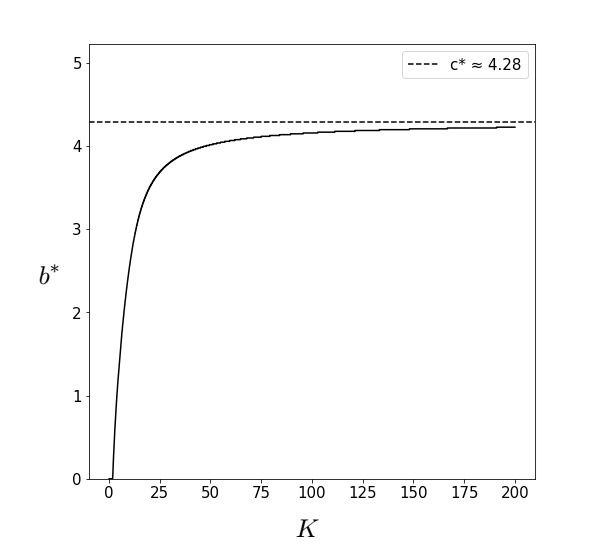}
\caption{Model and problem parameters: $\mu = 9, \sigma = 5, q = 1, p = 4$.}
\label{fig:kBrownian}
\end{figure}

Intuitively, as for the model with classical ruin, if the volatility parameter $\sigma$ is small, then the drift \textit{dominates} and it is likely the process will get away from the \textit{red zone} and avoid starting an exponential clock, so we expect $b^\ast$ to be small. Similarly, if the volatility parameter $\sigma$ is large, then the volatility \textit{dominates} and it is more likely that the process will go below zero, so we expect $b^\ast$ to be small since the process is not viable and liquidation should be declared as soon as possible. This is illustrated in Figure~\ref{fig:volatility}.

\begin{figure}[h!]
\centering
\textbf{Optimal level as a function of the volatility in a Brownian risk model}\par
\includegraphics{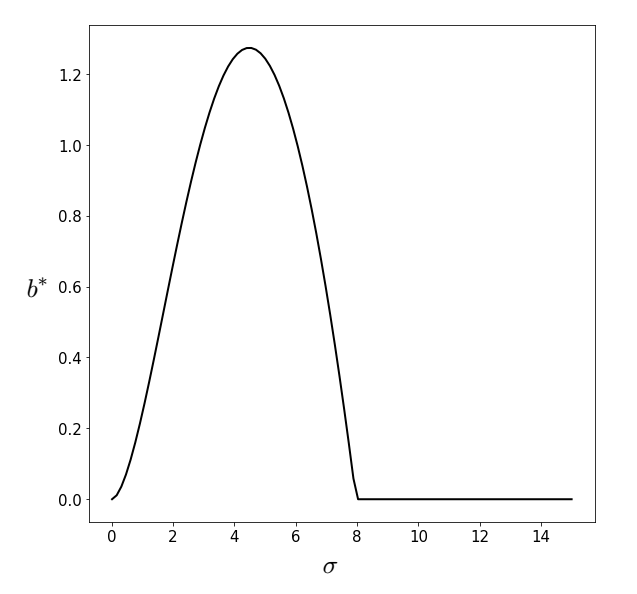}
\caption{Model and problem parameters: $\mu = 10, q = 1, p = 5, K = 5$.}
\label{fig:volatility}
\end{figure}

\subsection{Cram\'er-Lundberg model with exponentially distributed claims}

Let $X$ be a compound Poisson process with drift and exponentially distributed claims, i.e.,
\begin{equation*}
X_t = X_0 + ct - \sum_{i=1}^{N_t} C_i, \quad t \geq 0,
\end{equation*}
where $c > 0$, $N = \set{N_t : t \geq 0}$ is a Poisson process with intensity $\lambda > 0$, and where the $C_i$'s are independent and all exponentially distributed with mean $1/\alpha$. Consequently, we obtain
\begin{equation*}
h_p(b) = E_1 e^{\theta_1 b} + E_2 e^{\theta_2 b},
\end{equation*} 
where $\theta_2 < 0 < \theta_1$ are the two distinct roots of
\begin{equation*}
c\theta - \frac{\lambda \theta}{\alpha + \theta} - q = 0, \quad \theta \in \R,
\end{equation*}
where
\begin{equation*}
E_1 = \frac{p F_1 \theta_1 \Phi_K(q)}{(\Phi(p+q) - \theta_1)(\Phi_K(q) - \theta_1)}, \qquad E_2 = \frac{p F_2 \theta_2 \Phi_K(q)}{(\Phi(p+q) - \theta_2)(\Phi_K(q) - \theta_2)},
\end{equation*}
and where
\begin{equation*}
F_1 = \frac{(\alpha + \theta_1)^2}{c(\alpha + \theta_1)^2 - \lambda \alpha}, \qquad F_2 = \frac{(\alpha + \theta_2)^2}{c(\alpha + \theta_2)^2 - \lambda \alpha}.
\end{equation*}
Note that $\Phi(q) = \theta_1$.

First, in Figure~\ref{fig:pPoisson}, we see that $p \mapsto b^*$ also increases from zero to $b^*_\infty$ when $p$ increases, as discussed above and as in the Brownian model considered above.

\begin{figure}[h!]
\centering
\textbf{Optimal level as a function of the Parisian rate in a Cram\'er-Lundberg model}\par
\includegraphics{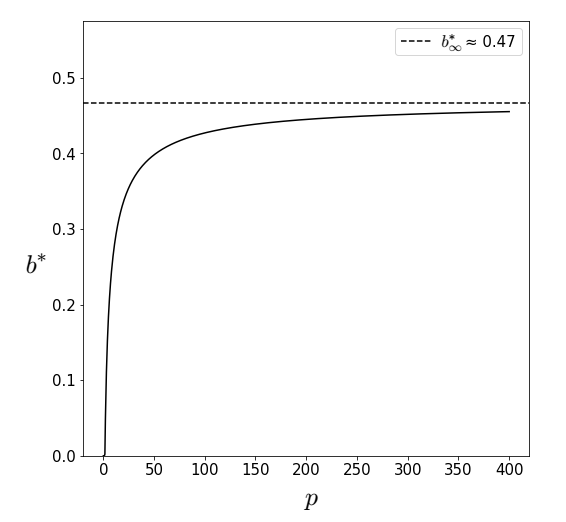}
\caption{Model and problem parameters: $c = 5, \lambda = 20, \alpha = 8, q = 2, K = 4$.}
\label{fig:pPoisson}
\end{figure}

Second, in Figure~\ref{fig:kPoisson}, we see that $K \mapsto b^*$ increases toward $c^*$ when $K$ increases up to $c$, which is the maximal value allowed for $K$ since, in this model, the surplus process has paths of BV.

\begin{figure}[h!]
\centering
\textbf{Optimal level as a function of the control rate bound in a Cram\'er-Lundberg model}\par
\includegraphics{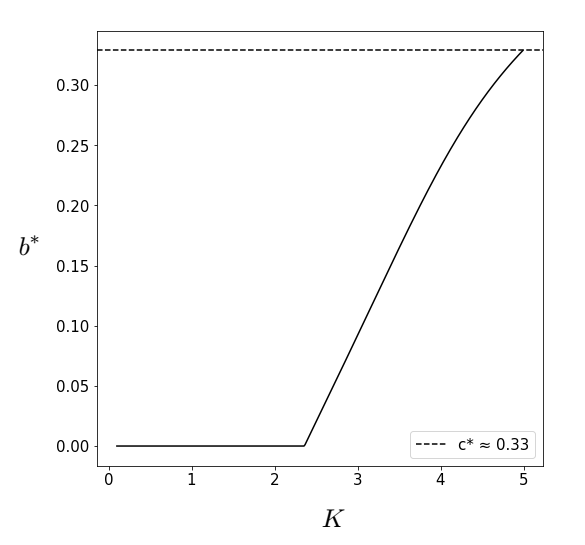}
\caption{Model and problem parameters: $c = 5, \lambda = 12, \alpha = 8, q = 2, p = 10$.}
\label{fig:kPoisson}
\end{figure}

Finally, let us look at the sensitivity of the optimal refraction level with respect to the claim frequency parameter, which will prove to be similar to the one for the volatility parameter in a Brownian risk model. Indeed, if the claim frequency parameter $\lambda$ is small, then the drift \textit{dominates} and it is likely the process will get away from the \textit{red zone} and avoid starting an exponential clock, so we expect $b^\ast$ to be small. Similarly, if the claim frequency $\lambda$ is large, then the jumps \textit{dominates} and it is more likely that the process will go below zero, so we expect $b^\ast$ to be small since the process is not viable and liquidation should be declared as soon as possible. This is illustrated in Figure~\ref{fig:rateofclaim}.

\begin{figure}[h!]
\centering
\textbf{Optimal level as a function of the claim frequency in a Cram\'er-Lundberg model}\par
\includegraphics{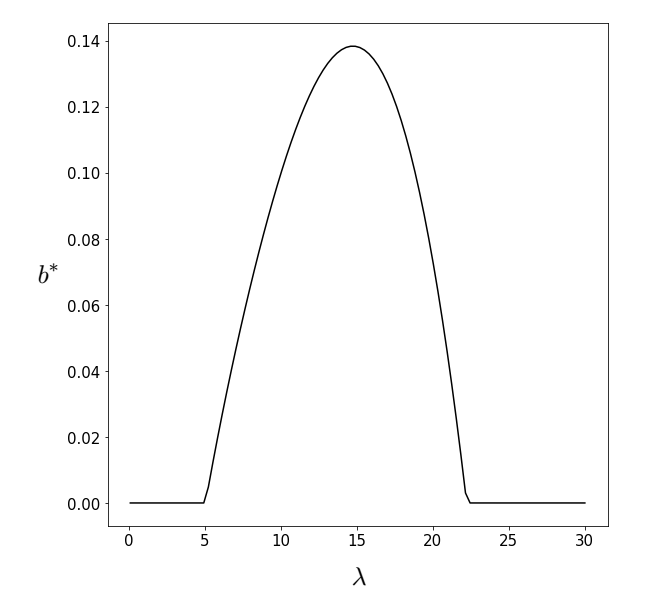}
\caption{Model and problem parameters: $c = 3.2, \alpha = 9.3, q = 1.6, p = 9.8, K = 1.9$.}
\label{fig:rateofclaim}
\end{figure}

\section*{Acknowledgements}

We thank two anonymous referees for their valuable comments and suggestions which have significantly improved the paper. We also thank Dante Mata L\'opez for fruitful discussions on the verification procedure.

Funding in support of this work was provided by a Discovery Grant (RGPIN-2019-06538) from the Natural Sciences and Engineering Research Council of Canada (NSERC) and a PhD Scholarship from the Fonds de Recherche du Québec - Nature et Technologies (FRQNT).

%
%
\bibliographystyle{abbrv}
\bibliography{references_de-finetti}

\appendix

\section{New identity}

The following is a generalization of Lemma 1 in \cite{renaud_2014}.
\begin{lem}\label{new-identity-lemma}
For all $x \geq b \geq 0$, we have
\begin{align}\label{new-identity-equation}
\esub{x}&\left[e^{-q \nu_b^-} Z_{q,p}\left(Y_{\nu_b^-}\right) \I{\nu_b^- < \infty} \right] \nonumber \\ 
& =p \int_0^\infty e^{-\Phi(p+q)y} w_b^{(q)}(x;-y) \mathrm{d}y -K \mathbb{W}^{(q)}(x-b) \int_0^\infty e^{-\Phi_K(q) z} Z_{q,p}^\prime(b+z) \mathrm{d}z.
\end{align}
\end{lem}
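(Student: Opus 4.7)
The plan is to apply Itô's formula to $e^{-qt} Z_{q,p}(Y_t)$ and exploit the PIDE $(\Gamma - q) Z_{q,p}(y) = 0$ on $(0,\infty)$, which is a direct consequence of the martingale interpretation of $Z_{q,p}$ implicit in~\eqref{upwardparisien} (and is confirmed later in Lemma~\ref{analyseV}). Since the generator of $Y = X - Kt$ is $\Gamma_Y = \Gamma - K\partial$, I would obtain $(\Gamma_Y - q) Z_{q,p}(y) = -K\, Z_{q,p}^\prime(y)$ for $y > 0$, so that
\[
e^{-qt} Z_{q,p}(Y_t) + K \int_0^t e^{-qs} Z_{q,p}^\prime(Y_s)\,\mathrm{d}s
\]
is a local martingale on $[0, \nu_b^-)$, on which $Y_{s-} \geq b \geq 0$ and hence the PIDE applies pointwise.

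After localising, taking expectations, and letting the localising times tend to infinity, one gets
\[
\esub{x}\left[e^{-q \nu_b^-} Z_{q,p}(Y_{\nu_b^-}) \I{\nu_b^- < \infty}\right] = Z_{q,p}(x) - K\,\esub{x}\left[\int_0^{\nu_b^-} e^{-qs} Z_{q,p}^\prime(Y_s)\,\mathrm{d}s\right].
\]
The vanishing of the $\{\nu_b^- = \infty\}$ contribution is handled via the Esscher change of measure~\eqref{esscher}: the integral representation of $Z_{q,p}$ from Section~\ref{preliminarysection} gives $Z_{q,p}(y) = O(e^{\Phi(q) y})$ for large $y$, while $\esub{x}[e^{\Phi(q) Y_T - qT}] = e^{\Phi(q) x}\,e^{-\Phi(q) K T} \to 0$.

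The resolvent expectation on the right is then computed with the classical $q$-potential density of the SNLP $Y$ killed on exit from $(b, \infty)$, namely $\mathbb{W}^{(q)}(x-b)\,e^{-\Phi_K(q)(z-b)} - \mathbb{W}^{(q)}(x-z)$ for $x,z \geq b$. Integrating $Z_{q,p}^\prime$ against this density produces
\[
\mathbb{W}^{(q)}(x-b) \int_0^\infty e^{-\Phi_K(q) y} Z_{q,p}^\prime(b+y)\,\mathrm{d}y - \int_b^x \mathbb{W}^{(q)}(x-z) Z_{q,p}^\prime(z)\,\mathrm{d}z,
\]
and the first of these terms, multiplied by $-K$, is exactly the correction appearing on the right-hand side of~\eqref{new-identity-equation}. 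It remains to identify $Z_{q,p}(x) + K \int_b^x \mathbb{W}^{(q)}(x-z) Z_{q,p}^\prime(z)\,\mathrm{d}z$ with $p \int_0^\infty e^{-\Phi(p+q) y} w_b^{(q)}(x;-y)\,\mathrm{d}y$, which follows by substituting the integral representations $Z_{q,p}(x) = p \int_0^\infty e^{-\Phi(p+q)y}W^{(q)}(x+y)\,\mathrm{d}y$ and~\eqref{Zqprimeutile} for $Z_{q,p}^\prime$, swapping integrals by Fubini, and recognising the result straight from the definition of $w_b^{(q)}$.

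The main technical obstacle is the justification of the optional-stopping passage to the limit on $\{\nu_b^- = \infty\}$, which relies on the Esscher estimate above. A secondary issue is enough regularity of $Z_{q,p}$ to apply Itô's formula in the unbounded-variation case; this is inherited from $W^{(q)}$, continuously differentiable on $(0,\infty)$ whenever $\nu$ has a density, through the integral representation of $Z_{q,p}$.
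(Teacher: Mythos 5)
Your argument is correct in substance, but it follows a genuinely different route from the paper. The paper never touches the generator: it expands $Z_{q,p}\left(Y_{\nu_b^-}\right)=p\int_0^\infty e^{-\Phi(p+q)y}W^{(q)}\left(Y_{\nu_b^-}+y\right)\mathrm{d}y$, uses Fubini and spatial homogeneity to reduce the problem to $\esub{x+y}\left[e^{-q\nu_{b+y}^-}W^{(q)}\left(Y_{\nu_{b+y}^-}\right)\I{\nu_{b+y}^-<\infty}\right]$, evaluates this by the two-sided identity of Lemma 1 in \cite{renaud_2014} and a limit $c\to\infty$ controlled through Esscher-transformed scale functions, and then applies Fubini once more; no smoothness of $Z_{q,p}$ and no martingale or optional-stopping argument is needed. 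You instead use the harmonicity $(\Gamma-q)Z_{q,p}=0$ on $(0,\infty)$, a Dynkin/It\^o argument up to $\nu_b^-$, and the $q$-resolvent density $e^{-\Phi_K(q)(z-b)}\mathbb{W}^{(q)}(x-b)-\mathbb{W}^{(q)}(x-z)$ of $Y$ killed at $\nu_b^-$; your formulas all check out (the transversality estimate via $Z_{q,p}(y)=O(e^{\Phi(q)y})$ and $\esub{x}\left[e^{\Phi(q)Y_T-qT}\right]=e^{\Phi(q)x}e^{-\Phi(q)KT}$ is right, and the final identification with $p\int_0^\infty e^{-\Phi(p+q)y}w_b^{(q)}(x;-y)\mathrm{d}y$ is exactly the definition of $w_b^{(q)}$ combined with \eqref{Zqprimeutile}). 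What your route buys is conceptual economy: the same harmonicity is needed later anyway (Lemma~\ref{analyseV}), and the resolvent computation produces the $h_p$-type integral directly. What it costs is precisely what the paper's proof is designed to avoid: first, the pointwise equation $(\Gamma-q)Z_{q,p}=0$ is not a ``direct consequence'' of \eqref{upwardparisien} --- extracting a pointwise PIDE from a martingale identity already presupposes the smoothness you are trying to use --- so it should be quoted from \cite{renaud_2019}, as the paper does in Lemma~\ref{analyseV}; second, the It\^o step needs $Z_{q,p}\in C^2$ in the unbounded-variation case, which does hold on $(0,\infty)$ since $W^{(q)}\in C^1(0,\infty)$ automatically for unbounded variation (note the lemma assumes no density for $\nu$, so you cannot lean on that hypothesis), but fails at $0$, so the case $b=0$ requires a Meyer--It\^o or approximation argument exploiting the continuity of $Z_{q,p}^\prime$ at $0$, plus the usual localization details you only sketch. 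With those points properly cited or repaired, your proof is a valid alternative; the paper's is shorter and assumption-free because it only recombines known exit identities.
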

\begin{proof}
We have, using Fubini's theorem and the fact that $\set{Y_t + y : t \geq 0 \mid Y_0 = x}$ has the same law as $\set{Y_t : t \geq 0 \mid Y_0 = x + y}$, \begin{align}
\esub{x}&\left[e^{-q \nu_b^-} Z_{q,p}\left(Y_{\nu_b^-}\right) \I{\nu_b^- < \infty} \right] \nonumber \\
&= \esub{x}\left[e^{-q \nu_b^-} \left(p \int_0^\infty e^{-\Phi(p+q)y} W^{(q)}\left(Y_{\nu_b^-} + y\right) \mathrm{d}y\right) \I{\nu_b^- < \infty}\right] \nonumber \\
&= p \int_0^\infty e^{-\Phi(p+q)y} \esub{x+y}\left[e^{-q \nu_{b+y}^-} W^{(q)}\left(Y_{\nu_{b+y}^-}\right) \I{\nu_{b+y}^- < \infty}\right] \mathrm{d}y.
\label{substitute1real}
\end{align}

From Lemma 1 in \cite{renaud_2014}, it is known that, for $b \leq x \leq c$, we have \begin{equation*}
\esub{x}\left[e^{-q \nu_b^-} W^{(q)}\left(Y_{\nu_{b}^-}\right) \I{\nu_b^- < \nu_c^+}\right] = w_b^{(q)}(x;0) - \frac{\mathbb{W}^{(q)}(x-b)}{\mathbb{W}^{(q)}(c-b)} w_b^{(q)}(c;0).
\end{equation*}

By letting $c \to \infty$, using the monotone convergence theorem, we have \begin{align*}
\esub{x} & \left[e^{-q \nu_b^-} W^{(q)}\left(Y_{\nu_{b}^-}\right) \I{\nu_b^- < \infty} \right] = w_b^{(q)}(x;0) - K \mathbb{W}^{(q)}(x-b) \int_0^\infty e^{-\Phi_K(q) y} W^{(q) \prime} (z+b) \mathrm{d}z.
\end{align*}

Indeed, when $c > b$, we have \begin{align*}
&\frac{w_b^{(q)}(c;0)}{\mathbb{W}^{(q)}(c-b)} = \frac{W^{(q)}(c)}{\mathbb{W}^{(q)}(c-b)} + K \int_b^c \frac{\mathbb{W}^{(q)}(c-z)}{\mathbb{W}^{(q)}(c-b)} W^{(q) \prime}(z) \mathrm{d}z \\
&= \frac{W^{(q)}(c)}{\mathbb{W}^{(q)}(c-b)} + K \int_0^{c-b} \frac{\mathbb{W}^{(q)}(c-b-z)}{\mathbb{W}^{(q)}(c-b)} W^{(q) \prime}(z+b) \mathrm{d}z \\
&= \frac{e^{\Phi(q)c}W_{\Phi(q)}(c)}{e^{\Phi_K(q)(c-b)}\mathbb{W}_{\Phi_K(q)}(c-b)} + K \int_0^{c-b} e^{-\Phi_K(q)y} \frac{\mathbb{W}_{\Phi_K(q)}(c-b-z)}{\mathbb{W}_{\Phi_K(q)}(c-b)} W^{(q) \prime}(z+b) \mathrm{d}z,
\end{align*}
where $W_{\Phi(q)}$ and $\mathbb{W}_{\Phi_K(q)}$ represent respectively the $0$-scale functions for the SNLPs $\left(X, \mathbf{P}_x^{\Phi(q)}\right)$ and $\left(Y, \mathbf{P}_x^{\Phi_K(q)}\right)$ given by the Esscher transformation (see \eqref{esscher}). Substituting into \eqref{substitute1real}, we get, using Fubini's theorem, \begin{align*}
&\esub{x}\left[e^{-q \nu_b^-} Z_{q,p}\left(Y_{\nu_b^-}\right) \I{\nu_b^- < \infty} \right] \\
&= p \int_0^\infty e^{-\Phi(p+q)y} \left\{w_{b+y}^{(q)}(x+y;0) - K \mathbb{W}^{(q)}(x-b) \int_0^\infty e^{-\Phi_K(q)z} W^{(q) \prime}(b+y+z) \mathrm{d}z\right\} \mathrm{d}y \\
&= p \int_0^\infty e^{-\Phi(p+q)y} w_b^{(q)}(x;-y) \mathrm{d}y - pK \mathbb{W}^{(q)}(x-b) \\
&\times \int_0^\infty e^{-\Phi_K(q) z} \left(\int_0^\infty e^{-\Phi(p+q)y} W^{(q) \prime}((b+z)+y) \mathrm{d}y \right) \mathrm{d}z \\
&= p \int_0^\infty e^{-\Phi(p+q)y} w_b^{(q)}(x;-y) \mathrm{d}y -K \mathbb{W}^{(q)}(x-b) \int_0^\infty e^{-\Phi_K(q) z} Z_{q,p}^\prime(b+z) \mathrm{d}z,
\end{align*}
which is exactly what we want.
\end{proof}

\section{Proof of Proposition~\ref{hardest}}\label{proof-of-hardest-prop}

As discussed in Section~\ref{section:convexity}, under our assumption on the Lévy measure, we have that $Z_{q,p}^\prime$ is decreasing on $[0, b^*]$. The representation of $V_{b^*}$ given in Corollary~\ref{cor:vbetoile} yields the same property for $V_{b^*}^\prime$, i.e., it is decreasing on $[0, b^*]$. Recalling that $V_{b^*}^\prime(b^*) = 1$, we see that all is left to show is that $V_{b^*}^\prime$ is decreasing from $1$ for $x \geq b^*$.

For $x \geq b^*$, using Leibniz rule, we can write
\begin{equation}\label{debutpreuve}
V_{b^*}^\prime(x) = -K\mathbb{W}^{(q)}(x-b^*) + \frac{(1+K\mathbb{W}^{(q)}(0))Z_{q,p}^\prime(x)}{h_p(b^*)} + \frac{K\int_{b^*}^x \mathbb{W}^{(q) \prime}(x-y) Z_{q,p}^\prime(y) \mathrm{d}y}{h_p(b^*)}.
\end{equation}

Using~\eqref{Zqprimeutile} and Fubini's theorem, we have
\begin{multline*}
K\int_{b^*}^x \mathbb{W}^{(q) \prime}(x-y) Z_{q,p}^\prime(y) \mathrm{d}y = \int_0^{\infty} Kpe^{-\Phi(p+q)z}\left[\int_0^{x} \mathbb{W}^{(q) \prime}(x-y) W^{(q) \prime}(y+z)\mathrm{d}y \right. \\
- \left. \int_0^{b^*} \mathbb{W}^{(q) \prime}(x-y) W^{(q) \prime}(y+z)\mathrm{d}y \right] \mathrm{d}z.
\end{multline*}

In \cite{kyprianou-et-al_2012}, the following identity is obtained: for $a > 0$,
\[
K\int_0^a \mathbb{W}^{(q) \prime}(a-y) W^{(q) \prime}(y) \mathrm{d}y = (1 - KW^{(q)}(0)) \mathbb{W}^{(q)\prime}(a) - (1+K\mathbb{W}^{(q)}(0))W^{(q)\prime}(a) .
\]
Using this identity, we can write
\begin{multline*}
K \int_0^{x} \mathbb{W}^{(q) \prime}(x-y) W^{(q) \prime}(y+z)\mathrm{d}y = K \int_z^{x+z} \mathbb{W}^{(q) \prime}(x+z-y) W^{(q) \prime}(y)\mathrm{d}y \\
= (1-KW^{(q)}(0)) \mathbb{W}^{(q)\prime}(x+z) - (1+K\mathbb{W}^{(q)}(0))W^{(q)\prime}(x+z) - K\int_0^{z} \mathbb{W}^{(q) \prime}(x+z-y) W^{(q) \prime}(y)\mathrm{d}y .
\end{multline*}

Putting the pieces back in~\eqref{debutpreuve} and using Fubini's theorem, we get
\begin{multline*}
V_{b^*}^\prime(x) = -K\mathbb{W}^{(q)}(x-b^*) + \frac{p(1 - KW^{(q)}(0))}{h_p(b^*)} \int_0^\infty e^{-\Phi(p+q)z} \mathbb{W}^{(q) \prime}(x+z) \mathrm{d}z \\
- \frac{K \int_0^\infty \int_y^\infty pe^{-\Phi(p+q)z}\mathbb{W}^{(q) \prime}(x+z-y) W^{(q) \prime}(y) \mathrm{d}z \mathrm{d}y}{h_p(b^*)} \\
- \frac{K \int_0^{b^*} \mathbb{W}^{(q) \prime}(x-y) Z_{q,p}^\prime(y) \mathrm{d}y}{h_p(b^*)}.
\end{multline*}

Applying Lemma~\ref{scalemonotone} to $\mathbb{W}^{(q)}$, we have that there exists a completely monotone function $f$ such that
\[
\mathbb{W}^{(q)}(x) = \frac{1}{\psi_K^\prime(\Phi_K(q))} e^{\Phi_K(q) x} - f(x)
\]
and consequently there exists a constant $C$ such that
\begin{multline*}
V_{b^*}^\prime(x) = C e^{\Phi_K(q) x} + Kf(x-b^*) - \frac{(1 - KW^{(q)}(0))\int_0^{\infty} pe^{-\Phi(p+q)y}f'(x+y) \mathrm{d}y}{h_p(b^*)} \\
+ \frac{K \int_0^\infty \int_y^\infty pe^{-\Phi(p+q)z} f'(x+z-y) W^{(q) \prime}(y) \mathrm{d}z \mathrm{d}y}{h_p(b^*)} + \frac{K \int_0^{b^*} f'(x-y) Z_{q,p}^\prime(y) \mathrm{d}y}{h_p(b^*)}.
\end{multline*}

By Bernstein's theorem, there exists a Borel measure $\mu$ such that $f(x) = \int_0^\infty e^{-xt} \mu(\mathrm{d}t)$. Therefore, using Fubini's theorem and elementary algebraic manipulations, we can further write
\begin{equation}\label{bernsteinv}
V_{b^*}'(x) = C e^{\Phi_K(q) x} + \int_0^\infty e^{-xt} u_{b^*}(t) \mu(\mathrm{d}t) ,
\end{equation}
where
\[
u_{b}(t) = K e^{bt} - \frac{K \int_0^{b} te^{ty}Z_{q,p}^\prime(y) \mathrm{d}y}{h_p(b)} + \frac{A_bt}{\Phi(p+q) + t}, 
\]
with
\[
A_b = \frac{p(1 - KW^{(q)}(0)) - KZ_{q,p}^\prime(0+)}{h_p(b)} .
\]
Note that $u_b$ is at least twice differentiable.

In the definition of $u_b$, we must have $C = 0$. Indeed, if $C \neq 0$, then the representation in~\eqref{bernsteinv} yields that $V_{b^*}'(x) \to \infty$, as $x \to \infty$, which is a contradiction with the fact that $V_{b^*}(x) \leq K/q$ for all $x \in \R$ by the formulation of our control problem. In conclusion, we have obtained the following representation:
\begin{equation}\label{bernsteinvreal}
V_{b^*}'(x) = \int_0^\infty e^{-xt} u_{b^*}(t) \mu(\mathrm{d}t).
\end{equation}

The rest of the proof is split in two parts. First, if $b^* = 0$, then from~\eqref{condition-generale} in Proposition~\ref{specifications-sur-b^*}, we have $h_p(0) \geq Z_{q,p}'(0)$. Consequently, for all $t \geq 0$,
\begin{equation*}
u_0(t) = K\left(1 - \left(\frac{Z_{q,p}'(0)}{h_p(0)}\right)\left(\frac{t}{\Phi(p+q) + t}\right)\right) + \left(\frac{p(1 - KW^{(q)}(0))}{h_p(0)}\right)\left(\frac{t}{\Phi(p+q) + t}\right) \geq 0 .
\end{equation*}
Indeed, we know that $h_p(0)>0$ and, from~\eqref{Wen0}, it is known that either $W^{(q)}(0)=0$ or $K < c = \left(W^{(q)}(0)\right)^{-1}$, whether $X$ has paths of unbounded or bounded variation, with the inequality being a standing assumption in our problem. It follows that, for all $x > 0$,
\[
V_{0}''(x) = \int_0^\infty (-t) e^{-xt} u_0(t) \mu(\mathrm{d}t) \leq 0. 
\]
In other words, $V_{0}$ is concave on $(0,\infty)$.

Second, let us start by noting that $u_{b^*}$ is infinitely differentiable and such that $u_{b^*}(0) = K$. Now, if $b^* > 0$, then assume there exists $0 < t_0 \leq \infty$ such that $u_{b^*}(t) \geq 0$, for $0 \leq t < t_0$, and that $u_{b^*}(t) \leq 0$ for $t > t_0$.  Under this assumption, we can use~\eqref{bernsteinvreal} to deduce that, for all $x > b^*$, 
\begin{multline}\label{doubleinferieur}
V_{b^*}^{\prime \prime}(x) = \int_0^\infty -te^{-(x-b^*)t} e^{-b^* t}u_{b^*}(t) \mu (\mathrm{d}t) \\
\leq e^{-(x-b^*)k} \int_0^\infty -te^{-b^* t}u_{b^*}(t) \mu(\mathrm{d}t) = e^{-(x-b^*)k}V_{b^*}^{\prime \prime}(b^*+) ,
\end{multline}
where
\[
V_{b^*}^{\prime \prime}(b^*+) = \frac{(1+K\mathbb{W}^{(q)}(0)) Z_{q,p}^{\prime \prime}(b^*)}{h_p(b^*)} .
\]
As discussed at the beginning of the proof, $Z_{q,p}^\prime$ is decreasing on $[0, b^*]$ because it is decreasing on $[0, c^*]$ and $b^* \leq c^*$ by Proposition~\ref{caracteristiques}. Consequently, $Z_{q,p}''(b^*) \leq 0$ and thus $V_{b^*}''(b^*+) \leq 0$, and by~\eqref{doubleinferieur}, we have $V_{b^*}''(x) \leq 0$, for all $x > b^*$. 

The rest of the proof consists in proving that there exists $0 < t_0 \leq \infty$ such that $u_{b^*}(t) \geq 0$, for $0 \leq t < t_0$, and that $u_{b^*}(t) \leq 0$ for $t > t_0$.  Note that $u_b(0)=K$.

Computing the first and second derivatives of $u_{b^*}$, and then using the fact that $y \mapsto V_{b^*}^\prime (y) = Z_{q,p}^\prime(y)/Z_{q,p}^\prime(b^*)$ is decreasing on $[0, b^*]$ (for both derivatives), we deduce that, for all $t \geq 0$,
\[
u_{b^*}'(t) \leq A_{b^*} \frac{\mathrm{d}}{\mathrm{d}t} \left(\frac{t}{\Phi(p+q) + t}\right)
\]
and
\[
u_{b^*}''(t) \leq A_{b^*} \frac{\mathrm{d}^2}{\mathrm{d}t^2} \left(\frac{t}{\Phi(p+q) + t}\right).
\]

If $A_{b^*} < 0$, then $u_{b^*}'(t) < 0$, for all $t > 0$, and the existence of $t_0$ such that $u_{b^*}(t) \geq 0$, for $0 \leq t \leq t_0$, and that $u_{b^*}(t) \leq 0$ for $t \geq t_0$ is guaranteed. If $A_{b^*} \geq 0$, then $u_{b^*}''(t) \leq 0$, and, in particular, $u_{b^*}$ is concave, and there exists $0 \leq t' < \infty$ such that $u_{b^*}$ is increasing for $0 \leq t \leq t'$ and decreasing for $t \geq t'$. In particular, the existence of $t_0$ is also guaranteed in this case. In both cases, the proof is complete.
\end{document}